\documentclass[12pt,bezier]{article}
\usepackage{tikz}
\usetikzlibrary{shapes.geometric,positioning,calc,decorations.pathreplacing,fit}
\usepackage[margin=0cm]{geometry}
\usepackage{times}
\usepackage{booktabs}
\usepackage{pifont}
\usepackage{floatrow}
\usepackage{caption}
\usepackage{mathrsfs}
\usepackage[fleqn]{amsmath}
\usepackage{breqn}
\usepackage{amsfonts,amsthm,amssymb,mathrsfs,bbding}
\usepackage{txfonts}
\usepackage{graphics,multicol}
\usepackage{graphicx}
\usepackage{color}
\usepackage{amssymb}
\usepackage{amsmath}

\usepackage{caption}
\usepackage{cite}
\usepackage{latexsym,bm}
\usepackage{indentfirst}
\usepackage{color}
\usepackage[colorlinks=true,anchorcolor=blue,filecolor=blue,linkcolor=blue,urlcolor=blue,citecolor=blue]{hyperref}
\usepackage{extarrows}
\usepackage{cite}
\usepackage{latexsym,bm}
\usepackage{mathtools}
\evensidemargin=\oddsidemargin\topmargin=-1.5cm

\newtheorem{thm}{Theorem}[section]
\newtheorem{case}{Case}
\newtheorem{subcase}{Case}[case]
\newtheorem{prob}{Problem}[section]
\newtheorem{corollary}{Corollary}[section]

\newtheorem{lem}{Lemma}[section]

\theoremstyle{definition}

\addtocounter{section}{0}

\begin{document}
\title{Spectral radius and parity $[a,b]$-factors in graphs\footnote{Supported by the National Natural Science Foundation of China
{(No. 12371361)}, Distinguished Youth Foundation of Henan Province {(No. 242300421045)}, and the National Research Foundation of Korea (NRF) grant funded by the Korea government(MSIT) (No. RS-2025-23523950).}}
\author{{\bf Ruifang Liu$^{a}$}, {\bf Ting Xu$^{a}$}, and {\bf Suil O$^{b}$}\thanks{Corresponding author.
E-mail addresses: rfliu@zzu.edu.cn (R. Liu), xuting2001@126.com (T. Xu), suil.o@sunykorea.ac.kr (S. O).}\\
{\footnotesize $^{a}$School of Mathematics and Statistics, Zhengzhou University, Zhengzhou, Henan, China}\\
{\footnotesize $^{b}$Department of Applied Mathematics and Statistics, The State University of New York, Incheon, Korea}}

\date{}

\maketitle
{\flushleft\large\bf Abstract}
Let $a$, $b$, and $n$ be three integers such that $1\leq a \leq b < n$, $a \equiv b$ (mod $2$), and $na$ is even. A parity $[a,b]$-factor of $G$ is a spanning subgraph $H$ such that for each vertex $v \in V(G)$, $a \leq d_H(v) \leq b$ and $d_H(v) \equiv a \equiv b$ (mod $2$). Recently, O [J. Graph Theory 100 (2022) 458-469] proved eigenvalue conditions for a regular graph to have a parity $[a,b]$-factor.

In this paper, we prove a sharp lower bound on the spectral radius for an $n$-vertex graph $G$ to have a parity $[a,b]$-factor as follows: If $G$ is an $n$-vertex connected graph with $\delta(G)\geq a$ and $\rho(G)\geq\rho(G_{n}^{a})$, then $G$ contains a parity $[a,b]$-factor unless $G \cong G_{n}^{a}$, where $2\leq a<b$ and $G_{n}^{a}$ is the graph obtained from $K_{a-1}\vee(K_{n-2a-1}\cup(a+1)K_1)$ by adding a new vertex and adding all possible edges between the added vertex and each vertex in $(a+1)K_1$.

\begin{flushleft}
\textbf{Keywords:} Spectral radius, Parity $[a,b]$-factor, Double eigenvector, Minimum degree
\end{flushleft}
\textbf{AMS Classification:} 05C50; 05C35

\section{Introduction}
All graphs considered in this paper are finite, simple, and undirected. Let $G$ be a graph with vertex set $V(G)$ and edge set $E(G)$. Denote by $|V(G)|=n$ and $|E(G)|=m$ the {\it order} and the {\it size} of $G$, respectively. Let $\overline{G}$ be the {\it complement} of $G$. For any two vertex-disjoint graphs $G_1$ and $G_2$, let $G_1\cup G_2$ be the disjoint union of $G_1$ and $G_2$. The {\it join} $G_1\vee G_2$ is the graph obtained from $G_{1}\cup G_{2}$ by adding all possible edges between $V(G_1)$ and $V(G_2)$. For any vertex $v \in V(G)$, let $d_{G}(v)$ be the {\it degree} of $v$ and $N_{G}(v)$ be the neighborhood of $v$ in $G$, respectively. Let $\delta(G)=\min_{v \in V(G)} d_G(v)$. We use $u \sim v$ to represent that $u$ is adjacent to $v$. For any subset $S$ of $V(G)$, denote by $G[S]$ the subgraph of $G$ induced by $S$. Furthermore, let $G-S$ be the induced subgraph $G[V(G)-S]$ for any subset $S$ of $V(G)$. For two vertex disjoint subsets $S, T \subseteq V(G)$, let $|[S,T]|_G$ denote the number of edges between $S$ and $T$.

Let $A(G)$ be the adjacency matrix of $G$, and let $\rho(G)=\lambda_{1}(G)\geq\lambda_{2}(G) \geq \cdots \geq \lambda_{n}(G)$ be its eigenvalues. By the Perron-Frobenius theorem, every connected graph $G$ has a positive unit eigenvector corresponding to $\rho(G)$, which is called the Perron vector of $A(G)$.

Factor theory traces its origins to the pioneering work of Tutte \cite{Tutte} in 1952, which aims to guarantee the existence of a spanning subgraph satisfying specific vertex degree constraints. Let $g$ and $f$ be two integer-valued functions defined on $V(G)$ such that for each $v \in V(G)$, $0 \leq g(v) \leq f(v)$. A {\it $(g,f)$-factor} of $G$ is a spanning subgraph $H$ of $G$ such that for each $v \in V(G)$, $g(v) \leq d_H(v) \leq f(v)$. A {\it parity $(g,f)$-factor} of $G$ is a $(g,f)$-factor $H$ of $G$ such that for each $v \in V(G)$, $d_H(v)\equiv g(v) \equiv f(v)$ (mod $2$). Let $f(S)= \sum_{v \in S}f(v)$ for any $S \subseteq V(G).$  In 1972, Lov\'asz\cite{Lovas} provided a necessary and sufficient condition for a graph to have a parity $(g,f)$-factor.

\begin{thm}[Lov\'{a}sz\cite{Lovas}]\label{thm1.0}
A graph $G$ has a parity $(g,f)$-factor if and only if for any two disjoint subsets $S$, $T$ of $V(G)$,\\
$$f(S)-g(T)+\sum_{x \in T}d_{G-S}(x)-q_{G}(S,T)\geq 0,$$
where $q_{G}(S,T)$ denotes the number of components $Q$ in $G-S-T$ such that $g(V(Q)) + |[V(Q),T]|_G \equiv 1$ $(\rm{mod}~2)$.
\end{thm}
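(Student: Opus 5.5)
The proof has two directions. Necessity is a direct parity count on a fixed parity factor. For sufficiency I would reduce to Tutte's $1$-factor theorem on an auxiliary graph and then translate a Tutte barrier back into a pair $(S,T)$ violating the inequality. Since the statement is symmetric in the roles of $g$ and $f$ only through parity, I would first remove trivialities. Assume $g(v)\equiv f(v)\pmod 2$ for every $v$. Also cap $f(v)$ at the largest value $\le d_G(v)$ of the correct parity, since this changes neither side of the problem.

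\emph{Necessity.} Let $F$ be a parity $(g,f)$-factor and fix disjoint $S,T$. Let $Q$ be a component of $G-S-T$ counted by $q_G(S,T)$. Summing $d_F$ over $V(Q)$ gives
\[
e_F(Q,S)+e_F(Q,T)\equiv g(V(Q)) \pmod 2.
\]
Together with $g(V(Q))+|[V(Q),T]|_G\equiv 1$, this shows that either $e_F(Q,S)\ge 1$, or $e_F(Q,T)\ne |[V(Q),T]|_G$. In the second case some $G$-edge between $Q$ and $T$ is missing from $F$. I would then chain three inequalities.
\begin{itemize}
\item[(i)] $f(S)\ge \sum_{v\in S}d_F(v)\ge e_F(S,T)+\#\{Q:\ e_F(Q,S)\ge1\}$.
\item[(ii)] $g(T)\le \sum_{x\in T}d_F(x)=e_F(S,T)+\sum_{x\in T}d_{F-S}(x)$.
\item[(iii)] $\sum_{x\in T}d_{G-S}(x)-\sum_{x\in T}d_{F-S}(x)\ge \#\{Q:\ \text{some }Q\text{--}T\text{ edge of }G\text{ is not in }F\}$.
\end{itemize}
Adding (i) and (iii), subtracting (ii), and using that every odd $Q$ falls in at least one of the two classes gives the inequality.

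\emph{Sufficiency.} I would build a graph $G^*$ with the following property: $G^*$ has a perfect matching iff $G$ has a parity $(g,f)$-factor.
\begin{itemize}
\item Replace each edge $uv$ by a path $u_e\, x_e\, y_e\, v_e$. Then either $u_e v_e$-ports are both "used" (matching $u_ex_e$, $y_ev_e$) or neither is (matching $x_ey_e$).
\item For each vertex $v$, let $A_v$ be its $d_G(v)$ ports.
\item Add a set $B_v$ of $d_G(v)-g(v)$ new vertices, each joined to all of $A_v$.
\item Add a set $C_v$ of $(f(v)-g(v))/2$ disjoint edges $b b'$ inside $B_v$. These allow up to $(f(v)-g(v))/2$ pairs of $B_v$ to be matched to each other rather than to ports.
\end{itemize}
In a perfect matching, the number of ports of $v$ matched into $B_v$ is $d_G(v)-g(v)-2k$ with $0\le k\le (f(v)-g(v))/2$. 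So the number of $G$-edges chosen at $v$ is $g(v)+2k\in\{g(v),g(v)+2,\dots,f(v)\}$, exactly as required. I would adjust the gadget (e.g.\ making the pairing structure in $B_v$ a matching on a designated subset) if this exact count needs tweaking.

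Now suppose $G$ has no parity factor. Tutte's theorem then gives a set $X\subseteq V(G^*)$ with $o(G^*-X)>|X|$. The plan is to normalize $X$ by standard exchange arguments (taking $X$ maximal, or a Gallai--Edmonds barrier). The target is a barrier such that, for each $v$, either $X\supseteq B_v$ (put $v\in S$), or $X\supseteq A_v$ (put $v\in T$), or $X$ meets the gadget of $v$ in nothing essential. After normalization, the odd components of $G^*-X$ should correspond to three kinds of objects:
\begin{itemize}
\item components $Q$ of $G-S-T$ whose parity $g(V(Q))+|[V(Q),T]|_G$ is odd;
\item single vertices $x_e$ or $y_e$ on edges from $T$;
\item the pieces inside gadgets of $S$.
\end{itemize}
Counting then gives $o(G^*-X)-|X|=q_G(S,T)+g(T)-f(S)-\sum_{x\in T}d_{G-S}(x)$ up to an even correction. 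Since $|V(G^*)|$ is even, $o(G^*-X)-|X|$ is even and positive. This yields a pair $(S,T)$ violating the inequality.

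The main obstacle is this normalization-and-counting step. One must show that a barrier may be assumed to be a union of whole $A_v$'s and $B_v$'s, and verify that each gadget contributes exactly $f(v)$ (for $v\in S$) or $d_{G-S}(v)-g(v)$ plus edge terms (for $v\in T$) to the deficiency. I would handle it by the usual monotonicity trick: moving a vertex into or out of $X$ never decreases $o(G^*-X)-|X|$ when it is a port adjacent to an already-included complete set.
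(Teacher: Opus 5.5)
The paper gives no proof of this statement; it is quoted from Lov\'asz and used only through Corollaries~\ref{corollary1} and~\ref{corollary2}. So your argument has to stand on its own. Your necessity argument is correct. Your gadget is also correct: in a perfect matching of $G^*$, exactly $g(v)+2k$ ports of $v$ are matched along their paths, with $0\le k\le (f(v)-g(v))/2$, so nothing needs tweaking.

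The genuine gap is the sufficiency step you yourself call the main obstacle. It is only announced, and the structure you announce is wrong:
\begin{itemize}
\item If $A_v\subseteq X$ and $B_v\cap X=\emptyset$, then $B_v$ falls apart into $(f(v)-g(v))/2$ copies of $K_2$ and $d(v)-f(v)$ isolated vertices. The gadget then contributes $d(v)-(d(v)-f(v))=f(v)$ to $|X|-o(G^*-X)$, so this is the $S$-case.
\item If $B_v\subseteq X$ and $A_v\cap X=\emptyset$, the gadget contributes $d(v)-g(v)$ minus the odd pieces hanging off the ports, so this is the $T$-case.
\item You have these two roles interchanged.
\item For an edge $e=vw$ with $v\in S$, $w\in T$, the odd component is the path $x_ey_ew_e$, not a single $x_e$ or $y_e$.
\item The count comes out exact, with no ``even correction''.
\item Your monotonicity principle (a port next to an ``already-included complete set'') does not apply, since $B_v$ is not complete.
\end{itemize}

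What actually carries the normalization is the following fact. If $w\in X$ and the neighbours of $w$ outside $X$ lie in at most two components of $G^*-X$, then removing $w$ from $X$ lowers $|X|$ by one and $o(G^*-X)$ by at most one. Hence $o(G^*-X)-|X|$ does not decrease. Apply this first to every $x_e,y_e$, since they have degree $2$. Then treat each $v$ in turn:
\begin{itemize}
\item If $A_v\not\subseteq X$ and $B_v\not\subseteq X$, all gadget vertices outside $X$ lie in one component. Strip first $A_v$ and then $B_v$ out of $X$.
\item If $A_v\subseteq X$, strip $B_v$, since each $b\in B_v$ then sees at most its $C_v$-partner.
\item If $B_v\subseteq X$, strip $A_v$, since each port then sees only its neighbour on the path.
\end{itemize}
Let $S$ be the vertices of the second kind and $T$ those of the third kind. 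A vertex with $B_v=\emptyset$ automatically falls into one of these, so the gadgets of the remaining vertices are connected. Then $|X|=\sum_{v\in S}d(v)+\sum_{v\in T}(d(v)-g(v))$ and $o(G^*-X)=\sum_{v\in S}(d(v)-f(v))+|[S,T]|_G+q_G(S,T)$. The last term arises because each component $Q$ of $G-S-T$ yields one component of $G^*-X$, of size $\equiv g(V(Q))+|[V(Q),T]|_G \pmod{2}$. Hence $|X|-o(G^*-X)$ equals $f(S)-g(T)+\sum_{x\in T}d_{G-S}(x)-q_G(S,T)$ exactly, which is therefore negative.

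Some smaller repairs are also needed:
\begin{itemize}
\item Here $f$ is your capped function $f'$. Capping lowers $f(S)$, so a violating pair for $f'$ is not automatically one for $f$, contrary to ``changes neither side''. Remove capped vertices from $S$ one at a time. Each removal raises the left side by at most $d(v)-f'(v)\le 1$. The left side is always $\equiv g(V(G))\pmod{2}$, by the same computation as for \eqref{eq1}. For odd $g(V(G))$, the pair $S=T=\emptyset$ already violates.
\item The case $g(v)>d_G(v)$ must be dispatched first, via $(S,T)=(\emptyset,\{v\})$.
\item Your claim that $|V(G^*)|$ is even is false in general, since $|V(G^*)|\equiv g(V(G))$. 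It is not needed, because Tutte's theorem already gives $o(G^*-X)>|X|$.
\end{itemize}
A shorter route altogether: adding $(f(v)-g(v))/2$ loops at each $v$ turns the statement verbatim into Tutte's $f$-factor theorem for graphs with loops.
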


Let $a$ and $b$ be two positive integers with $a \leq b$. An {\it $[a,b]$-factor} of $G$ is a $(g,f)$-factor such that $g(v) \equiv a$ and $f(v) \equiv b$. A {\it parity $[a,b]$-factor} of $G$ is a parity $(g,f)$-factor such that $g(v) \equiv a$ and $f(v) \equiv b$. When $g$ and $f$ are constant, we use $[g,f]$ instead of $(g,f)$ to avoid confusion for the reader. According to Theorem \ref{thm1.0}, one can obtain the following corollary directly.

\begin{corollary}[Lov\'{a}sz\cite{Lovas}]\label{corollary1}
Let $a,b$, and $n$ be three positive integers such that $a\leq b$, $a \equiv b$ $(\rm{mod}~2)$, and $na$ is even. Suppose that $G$ is a graph of order $n$. Then $G$ has a parity $[a,b]$-factor if and only if for any two disjoint subsets $S,T$ of $V(G),$
\begin{eqnarray*}
\eta(S,T)=b|S|-a|T|+\sum_{x \in T}d_{G-S}(x)-q_G(S,T) \geq 0,
\end{eqnarray*}
where $q_G(S,T)$ denotes the number of components $Q$ in $G-S-T$ such that $a|V(Q)| + \allowbreak |[V(Q),T]|_G \equiv 1$ $(\rm{mod}~2)$.
\end{corollary}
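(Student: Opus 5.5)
We derive the corollary by specializing Theorem \ref{thm1.0} to the constant functions $g(v)=a$ and $f(v)=b$ for every $v\in V(G)$. First we check that these are admissible: $0\le g(v)\le f(v)$ holds since $1\le a\le b$. The parity requirement $g(v)\equiv f(v) \pmod 2$ holds because $a\equiv b \pmod 2$. By definition, a parity $(g,f)$-factor with these choices is exactly a parity $[a,b]$-factor of $G$. So $G$ has a parity $[a,b]$-factor if and only if the inequality of Theorem \ref{thm1.0} holds for all disjoint $S,T\subseteq V(G)$.

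Next we substitute into that inequality term by term. We get $f(S)=\sum_{v\in S}b=b|S|$ and $g(T)=a|T|$. The term $\sum_{x\in T}d_{G-S}(x)$ is unchanged. For each component $Q$ of $G-S-T$ we have $g(V(Q))=a|V(Q)|$. Hence the parity condition defining $q_G(S,T)$ becomes $a|V(Q)|+|[V(Q),T]|_G\equiv 1 \pmod 2$, and $q_G(S,T)$ in Theorem \ref{thm1.0} is precisely the quantity in the corollary. The inequality therefore reads $\eta(S,T)=b|S|-a|T|+\sum_{x\in T}d_{G-S}(x)-q_G(S,T)\ge 0$, which gives both directions of the equivalence at once.

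The hypothesis that $na$ is even plays no role in the equivalence itself. We record it because of the degree-sum parity: any parity $[a,b]$-factor $H$ has $\sum_v d_H(v)\equiv na \pmod 2$, and this sum must be even. So the condition can only be met when $na$ is even, and assuming it excludes only trivially impossible cases. One can also see this consistency directly from the criterion: taking $S=T=\emptyset$ gives $\eta=-q_G(\emptyset,\emptyset)$. Here $q_G(\emptyset,\emptyset)$ counts the components of $G$ of odd order when $a$ is odd, and is $0$ when $a$ is even. It vanishes automatically only when every component has even order, which is compatible with $na$ even.

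There is no real obstacle. The only step requiring care is matching the definition of $q_G(S,T)$ under the substitution $g\equiv a$.
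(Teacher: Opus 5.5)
Your proposal is correct and is the same argument as the paper, which obtains the corollary directly from Theorem~\ref{thm1.0} by taking the constant functions $g\equiv a$ and $f\equiv b$. Your additional observation is also right: the hypothesis that $na$ is even is not needed for the equivalence itself, and it only becomes relevant later for the parity fact \eqref{eq1}.
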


For convenience, we call a component $Q$ satisfying $a|V(Q)| +  |[V(Q),T]|_G \equiv 1$ (mod $2$) an $a$-odd component. In Corollary \ref{corollary1}, one can prove that
\begin{eqnarray}\label{eq1}
\eta(S,T)\equiv 0 \text{ (mod $2$)}.
\end{eqnarray}
In fact, let $Q_1, Q_2,\dots, Q_{q'}$ be all components in $G-S-T$. Since $q_{G}(S,T)$ is the number of $a$-odd components in $G-S-T$, we have $q_{G}(S,T) =\sum_{i=1}^{q'}[(a|V(Q_i)|+|[V(Q_i),T]|_G) \text{ (mod 2)}],$ and hence
$$q_{G}(S,T)\equiv (a(n-|S|-|T|)+|[V(G)-S-T,T]|_G)\text{ (mod 2)}.$$
Note that $\eta(S,T) = b(|S|-|T|)+(b-a)|T|+2|E(T)|+|[V(G)-S-T,T]|_G -q_{G}(S,T),$ $a\equiv b$ (mod $2$), and $na$ is even. Then we have
\begin{eqnarray*}
\eta(S,T)\equiv b(|S|-|T|)-a(n-|S|-|T|)\equiv 0 \text{ (mod $2$)}.
\end{eqnarray*}

By Corollary \ref{corollary1} and \eqref{eq1}, we can directly obtain the following corollary.

\begin{corollary}\label{corollary2}
Let $a,b$, and $n$ be three positive integers such that $a \leq b$, $a \equiv b$ $(\rm{mod}~2)$, and $na$ is even. Suppose that $G$ is a graph of order $n$. Then $G$ has no parity $[a,b]$-factor if and only if there exist two disjoint subsets $S,T$ of $V(G)$, $$ \sum_{x \in T} d_{G-S}(x) \leq a|T|-b|S|+ q_{G}(S,T) -2,$$
where $q_{G}(S,T)$ denotes the number of components $Q$ in $G-S-T$ such that $a|V(Q)|\allowbreak + |[V(Q),T]|_G\equiv 1$ $(\rm{mod}~2)$.
\end{corollary}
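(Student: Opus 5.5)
Corollary \ref{corollary2} is the contrapositive of Corollary \ref{corollary1}, sharpened by the parity fact \eqref{eq1}, so the plan is to combine these two results directly.

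For the ``only if'' direction, assume $G$ has no parity $[a,b]$-factor. By Corollary \ref{corollary1} there exist disjoint subsets $S,T\subseteq V(G)$ with $\eta(S,T)<0$, that is, $\eta(S,T)\le -1$. Since $\eta(S,T)$ is an integer, \eqref{eq1} says it is even, so $\eta(S,T)\le -2$. Writing out $\eta(S,T)=b|S|-a|T|+\sum_{x\in T}d_{G-S}(x)-q_G(S,T)$ and rearranging gives exactly
$$\sum_{x\in T}d_{G-S}(x)\le a|T|-b|S|+q_G(S,T)-2.$$

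For the ``if'' direction, suppose such $S,T$ exist. Then $\eta(S,T)\le -2<0$. By Corollary \ref{corollary1}, $G$ has no parity $[a,b]$-factor.

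The only step with any content is the parity claim \eqref{eq1}, which the paper has already verified. That verification uses $a\equiv b \pmod 2$ and $na$ even, together with the identity $\sum_{x\in T}d_{G-S}(x)=2|E(T)|+|[V(G)-S-T,T]|_G$ and the congruence $q_G(S,T)\equiv a(n-|S|-|T|)+|[V(G)-S-T,T]|_G \pmod 2$. These are exactly the hypotheses of Corollary \ref{corollary2}, so nothing further is needed and I expect no real obstacle.
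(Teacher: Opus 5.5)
Your proposal is correct and is the paper's argument: the paper derives Corollary~\ref{corollary2} directly from Corollary~\ref{corollary1} together with the parity fact \eqref{eq1}. As you say, evenness upgrades $\eta(S,T)<0$ to $\eta(S,T)\le -2$ for the ``only if'' direction, while the ``if'' direction needs only $\eta(S,T)\le -2<0$.
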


Since then, motivated by Lov\'{a}sz's $(g,f)$-factor theorem, a lot of researchers have paid much attention to the existence of an $[a,b]$-factor from the perspective of degree condition. Nishimura  \cite{Nishimura} proposed a degree condition for the existence of a $k$-factor, where a {\it $k$-factor} of $G$ is a $[k,k]$-factor of $G$. Subsequently, Li and Cai \cite{Li} extended the above result to an $[a, b]$-factor. In 2018, Liu and Lu \cite{Liu} established a degree condition for the existence of a parity $[a, b]$-factor.

With the development of spectral graph theory, researchers have focused on exploring the connections between eigenvalues and the existence of a parity $[a, b]p1$-factor. In 2010, Lu et al. \cite{Lu} provided an upper bound on $\lambda_3(G)$ for a regular graph to contain an odd $[1, b]$-factor. Kim et al. \cite{Kim} improved the result of \cite{Lu}. O \cite{O} in 2022 established upper bounds for certain eigenvalues to ensure the existence of a parity $[a,b]$-factor in $h$-edge-connected $r$-regular graph. Kim and O \cite{Kim2} proved sharp upper bounds on certain eigenvalues for an $h$-edge-connected graph with given minimum degree to guarantee the existence of a parity $[a,b]$-factor. Wang
et al. \cite{Wang} proposed a tight spectral radius condition for a graph to contain a parity $[a,b]$-factor. Jia et al. \cite{Jia} presented sufficient conditions based on the distance spectral radius and the $Q$-spectral radius for the existence of a parity $[a,b]$-factor in a connected graph, respectively.

Note that $\delta(G)\geq a$ is a trivial necessary condition for a graph $G$ to contain an $[a,b]$-factor. So Hao and Li \cite{Hao} put forward the following interesting and challenging problem.

\begin{prob}\label{p0}
Determine sharp lower bounds on the size or spectral radius of an $n$-vertex graph $G$ with $\delta(G)\geq a$ such that $G$ contains an $[a,b]$-factor.
\end{prob}

Very recently, Tang and Zhang \cite{Tang} solved Problem \ref{p0} for $a=b$. Subsequently, Fan et al. \cite{Fan2} answered Problem \ref{p0} for $a<b$. Let $H_n^{a,b}$ be the graph obtained from $K_a\vee(K_{n-a-b-1}\cup(b+1)K_1)$ by adding $a-1$ edges between one vertex in $(b+1)K_1$ and $a-1$ vertices in $K_{n-a-b-1}$.

\begin{thm}[Fan et al. \cite{Fan2}]\label{thm1.1}
Let $a$ and $b$ be two positive integers with $a<b$, and let $G$ be a connected graph of order $n \geq 2(a+b+2)(b+2)$ with minimum degree $\delta(G)\geq a$. If $\rho(G)\geq \rho(H_n^{a,b})$, then $G$ contains an $[a,b]$-factor unless $G \cong H_n^{a,b}.$
\end{thm}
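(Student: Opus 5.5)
We prove Theorem~\ref{thm1.1} by contradiction. Suppose $G$ is connected of order $n\ge 2(a+b+2)(b+2)$ with $\delta(G)\ge a$, $\rho(G)\ge\rho(H_n^{a,b})$, and $G$ has no $[a,b]$-factor. The structural input is the $[a,b]$-factor analogue of Corollary~\ref{corollary2}, obtained from Theorem~\ref{thm1.0} without parity (Lov\'asz's $(g,f)$-factor criterion with $g\equiv a$, $f\equiv b$, $a<b$). It yields disjoint sets $S,T\subseteq V(G)$ with
$$\sum_{x\in T}d_{G-S}(x)\le a|T|-b|S|-1.$$
When $a<b$ the odd-component term $q$ does not appear, so the constraint is purely degree-theoretic. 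Set $s=|S|$ and $t=|T|$. Since $\delta(G)\ge a$, each $x\in T$ has $d_{G-S}(x)\ge a-s$. Hence $(a-s)t\le at-bs-1$, which forces $t\ge bs/s+1/s$, that is $t\ge b+1$. The same inequality forces $T\ne\emptyset$, and if $s=0$ it fails outright, so $s\ge1$.

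The next step is edge maximality. Adding edges inside $S$, between $S$ and everything, and inside $V(G)-S-T$ keeps the violating inequality intact and, by Perron--Frobenius, strictly increases $\rho$. We may therefore assume $G[S]$ is complete, $S$ is joined to all vertices, and $R=V(G)-S-T$ is a clique. The remaining freedom is the edges inside $T$ and between $T$ and $R$, constrained by $e(T)\cdot2+|[T,R]|\le (a-s)t+\ldots$; more precisely,
$$\sum_{x\in T} d_{G-S}(x)\le (a-s)t-bs+st-1.$$
We then split into cases on $s$ relative to $a$.

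\textbf{Case $s\ge a$.} Here $t\ge (bs+1)/a$ is large while $T$ is nearly independent. We compare $G$ with $K_s\vee(K_{n-s-t}\cup tK_1)$ and bound its spectral radius via the quotient-matrix equation (a cubic). We expect to show it is below $\rho(H_n^{a,b})\approx n-b-2+\ldots$ once $n$ is large. The standard estimate is $\rho\le$ the root of the cubic, compared with $n-a-b-1+\epsilon$, and the bound $n\ge2(a+b+2)(b+2)$ is used exactly here.

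\textbf{Case $1\le s\le a-1$.} The minimum-degree condition forces each $T$-vertex to have $\ge a-s$ neighbours outside $S$, while the total $\sum_{x\in T} d_{G-S}(x)$ is only about $(a-s)t-bs$. For $s\ge1$ this means $t\ge b+1$ and only a total of roughly $(a-s)t-bs-1$ edges leave $T$ into $G-S$. The extremal configuration should be $s=a$ in $H_n^{a,b}$. Reconciling this with the present case is the delicate point, so we reexamine it: in $H_n^{a,b}$ we have $|S|=a$ and $T=(b+1)K_1$ with one vertex carrying $a-1$ extra edges. Then $\sum_{x\in T} d_{G-S}(x)=a-1$ and $at-bs-1=a(b+1)-ab-1=a-1$, so it is tight, with $s=a$. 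The case $s\le a-1$ should then be handled by showing $\rho(G)<\rho(H_n^{a,b})$ through edge counting: $e(G)\le\binom{n-t}{2}+st+\frac12((a-s)t-bs)$. Combined with Hong's bound $\rho\le\sqrt{2m-n+1}$ when $t$ is large, and with a direct quotient comparison when $t$ is small, this gives the inequality.

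Finally, for $s=a$ and $t=b+1$, the constraint leaves exactly $a-1$ edges from $T$ to $G-S$. We must show that the optimal placement concentrates them on one $T$-vertex going into $R$, which yields $H_n^{a,b}$. This uses the Perron vector: vertices of $R$ have larger entries than those of $T$, and a Kelmans-type shifting of edges toward a single $T$-vertex increases $\rho$.

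We expect the main obstacle to be the case $s\ge a$ with $t$ only moderately larger than $b+1$. There the spectral gap to $H_n^{a,b}$ is of order $1/n$, so the quotient-matrix polynomial comparison must be done carefully using the assumed lower bound on $n$.
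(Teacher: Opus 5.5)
The paper gives no proof of this theorem; it is quoted from Fan et al. The comparison below is therefore with the toolkit the paper uses for Theorem~\ref{thm1.3}.

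Your reduction is sound. For $a<b$, Lov\'asz's criterion has no odd-component term. The condition $\delta\ge a$ gives $st\ge bs+1$, hence $s\ge 1$ and $t\ge b+1$. Also $S$ and $R=V(G)-S-T$ may be saturated.

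However, you have put the difficulty in the wrong place. Since $at-bs-1\ge 0$ and $b+1>a$, $s\ge a+1$ already forces $t\ge b+2$, and $t=b+1$ forces $s\le a$. The regime $t\ge b+2$ is the easy one. There the clique $S\cup R$ has at most $n-b-2$ vertices. The non-edges inside $T$ and between $T$ and $R$ number at least $\binom{t}{2}+t|R|-(at-bs-1)$, and feeding this into Lemma~\ref{le2.2} gives $\rho(G)<n-b-2<\rho(H_n^{a,b})$ for $n\ge 2(a+b+2)(b+2)$. That is a gap of order one, not $1/n$.

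Two smaller errors in that part:
\begin{itemize}
\item Comparing $G$ with $K_s\vee(K_{n-s-t}\cup tK_1)$ is not legitimate as written. $G$ has up to $at-bs-1$ further edges at $T$, so it is not a subgraph of that graph.
\item Your edge bound should end with $+(at-bs-1)$, not $+\frac12((a-s)t-bs)$; the latter can even be negative.
\end{itemize}

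The genuine gap is the case $t=b+1$, $1\le s\le a$, which is where the theorem is decided and which your outline does not resolve. Every saturated candidate here contains the clique $S\cup R$ of order $n-b-1$, while $T$ has bounded degrees. So $\rho(G)$ and $\rho(H_n^{a,b})$ both equal $n-b-2+O(n^{-2})$. Lemma~\ref{le2.2}, and a fortiori $\sqrt{2m-n+1}$, only yields $n-b-2+c/n$ with $c>0$ at these edge counts. Hence edge counting cannot exclude $s\le a-1$, and there is no equitable partition on which to run a ``direct quotient comparison''.

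The missing idea is a Perron-vector normalization inside an extremal graph of the family.
\begin{itemize}
\item Make $T$ independent: delete $uu'$ and add $vu,vu'$ for a common non-neighbour $v\in R$, as in the proof of Lemma~\ref{le3.9}.
\item Use Lemma~\ref{le2.4} so that every $R$-neighbour of each $u\in T$ has a strictly larger entry than every $R$-non-neighbour.
\end{itemize}
The sets $N_R(u)$ are then nested and each has size at least $a-s$. So some $a-s$ vertices of $R$ are adjacent to all of $T$, i.e.\ they are universal. Moving them into $S$ gives $S'$ with $|S'|=a$ and $\sum_{x\in T}d_{G-S'}(x)\le s-1<a-1=a(b+1)-b|S'|-1$. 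One more edge can therefore be added without creating an $[a,b]$-factor, contradicting maximality. Hence $s=a$.

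For $s=a$, your ``Kelmans-type shift toward a single $T$-vertex'' does not work either. After normalization the sets $N_R(u)$ are nested, so a Kelmans move between two $T$-vertices moves nothing. Rotating $uv$ to $u'w$ with $w\in R\setminus N(u')$ has first-order change $2(x_{u'}x_w-x_ux_v)$. This is negative for instance when $a=3$ and the two extra edges are $uv,u'v$: then $x_u=x_{u'}$ and $x_v>x_w$.

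What works is the double-eigenvector identity of Lemma~\ref{le3.9}. Let $\mathbf y$ be the Perron vector of the concentrated graph. Then $\mathbf y^{T}(\rho'-\rho)\mathbf x=y_{u'}x_w-y_ux_v>0$, because $y_{u'}/y_u\approx 1+2/a$ while $x_v/x_w=1+O(n^{-2})$.
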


For $a \equiv b$ $(\rm{mod}~2)$, if a graph $G$ contains a parity $[a,b]$-factor, then it must have an $[a,b]$-factor. Hence one can propose a stronger spectral version of Problem \ref{p0} on parity $[a,b]$-factor.

\begin{prob}\label{p1}
What spectral radius condition suffices to guarantee that a connected graph with minimum degree $\delta(G)\geq a$ has a parity $[a,b]$-factor?
\end{prob}

Tang and Zhang \cite{Tang} solved Problem \ref{p1} for $a=b$. Fan et al. \cite{Fan} solved Problem \ref{p1} for $a=1$.

\begin{thm}[Fan et al. \cite{Fan}]\label{thm1.2}
Suppose that $G$ is a connected graph of even order $n \geq 4b+8$. If
$$\rho(G) \geq \rho(K_1\vee(K_{n-b-2} \cup (b+1)K_1)),$$
then $G$ contains an odd $[1, b]$-factor unless $G \cong K_1\vee(K_{n-b-2} \cup (b+1)K_1)$.
\end{thm}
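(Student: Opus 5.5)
We argue by contradiction and reduce to the classical component-counting obstruction. Suppose $G$ is connected, of even order $n\geq 4b+8$, satisfies $\rho(G)\geq \rho(K_1\vee(K_{n-b-2}\cup(b+1)K_1))$, and has no odd $[1,b]$-factor. Here $a=1$, $b$ is odd, and $na=n$ is even, so Corollary \ref{corollary2} applies. It gives disjoint $S,T$ with
$$\sum_{x\in T}d_{G-S}(x)\leq |T|-b|S|+q_G(S,T)-2.$$

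\textbf{Step 1: reduce to $T=\emptyset$.} The first step is to show we may take $T=\emptyset$. This amounts to Amahashi's criterion: $G$ has an odd $[1,b]$-factor iff $o(G-S)\leq b|S|$ for all $S$. I would derive it from the inequality above by deleting the vertices of $T$ one at a time, choosing them in increasing order of $d_{G-S}(x)$.
\begin{itemize}
\item If $d_{G-S}(x)=0$, then $x$ is an isolated vertex. Moving it out of $T$ turns it into an odd component, so $|T|$ drops by one and $q$ rises by one.
\item If $d_{G-S}(x)\geq 1$, then moving $x$ into $G-S-T$ merges at most $d_{G-S}(x)$ components with $x$. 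This changes $q$ by at most $d_{G-S}(x)-1$ in the bad direction, which is compensated by the loss of $d_{G-S}(x)$ on the left-hand side.
\end{itemize}
Checking the parity bookkeeping in the second case needs some care. The outcome should be a set $S$ with $o(G-S)\geq b|S|+2$, where the $+2$ comes from the parity identity \eqref{eq1}. Since $G$ is connected, $s:=|S|\geq 1$.

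\textbf{Step 2: structural reduction.} With $s$ fixed, $G$ is a spanning subgraph of $K_s\vee(K_{n_1}\cup\cdots\cup K_{n_{q}})$ with $q=bs+2$ and all $n_i$ odd. The surplus odd components can be merged in pairs with a third component, which preserves both the parity count and the bound. Adding edges strictly increases $\rho$. By the standard eigenvector argument (Kelmans-type moving of vertices from a smaller clique to the largest one), the spectral radius is maximized when $n_1=n-s-bs-1$ and $n_i=1$ for $i\geq 2$. Hence
$$\rho(G)\leq \rho\big(K_s\vee(K_{n-(b+1)s-1}\cup(bs+1)K_1)\big)=:\rho(G_s),$$
with equality only if $G\cong G_s$.

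\textbf{Step 3: comparing the values of $s$.} It remains to show $\rho(G_s)<\rho(G_1)$ for $s\geq 2$; this is where I expect the real work and where the hypothesis $n\geq 4b+8$ enters.

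\emph{Lower bound for $G_1$.} Since $G_1$ contains $K_{n-b-1}$, we have $\rho(G_1)>n-b-2$.

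\emph{Upper bound for $G_s$ when $s$ is small.} I would use the $3\times3$ equitable quotient matrix of $G_s$, with parts $K_s$, the large clique, and the $bs+1$ isolated vertices. Its characteristic polynomial is a cubic $f_s(x)$. The plan is to show $f_s(n-b-2)>0$ and that $f_s$ is increasing beyond that point. This should work for small $s$.

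\emph{Upper bound for large $s$.} Here I would use Hong's bound $\rho\leq\sqrt{2m-n+1}$. The edge count of $G_s$ is at most $\binom{n-bs-1}{2}+s(bs+1)$, which is much smaller than $\binom{n-b-1}{2}$ once $bs$ is comparable to $n$.

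\emph{Main obstacle.} The delicate part is the middle range of $s$, where the two estimates must overlap. The threshold $n\geq 4b+8$ is presumably what makes them meet; if they do not, I would fall back on a convexity argument for $f_s(n-b-2)$ as a function of $s$ on $[2,(n-2)/(b+1)]$, checking only the endpoints.

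\textbf{Conclusion.} Once $\rho(G_s)<\rho(G_1)$ is established for $s\geq 2$, we get $\rho(G)\leq\rho(G_1)$. Equality forces $s=1$ and $G\cong K_1\vee(K_{n-b-2}\cup(b+1)K_1)$, which proves Theorem \ref{thm1.2}.
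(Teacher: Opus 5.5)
The paper does not prove this statement. It is quoted from Fan et al.\ only to contrast the case $a=1$ with Theorem~\ref{thm1.3}. Judged on its own, your outline is correct and is the natural route.

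\textbf{Step 1.} Your bullet claim holds as stated, even without \eqref{eq1}. For $a=1$, the component formed by $x$ together with the components it absorbs has parity $1+o+d_{G-S}(x) \pmod 2$, where $o$ is the number of absorbed odd components. Hence $q$ drops by at most $d_{G-S}(x)-1$. The order in which you remove vertices from $T$ is irrelevant.

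\textbf{Step 2.} This is Lemma~\ref{le2.6} together with Lemma~\ref{le2.1}, after absorbing even components and surplus odd ones.

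\textbf{Step 3.} You did not carry it out, but the ``main obstacle'' you anticipate does not arise. Lemma~\ref{le2.2} with $\delta=1$ (Hong's bound) handles every $s\ge 2$ at once. With $m(G_s)=\binom{n-bs-1}{2}+s(bs+1)$ one gets
\[
(n-b-2)^2-\bigl(2m(G_s)-n+1\bigr)=2b(s-1)n-(b^2+2b)s^2-(3b+2)s+b^2+4b+1 .
\]
This is a concave quadratic in $s$. At $s=2$ it equals $2bn-3b^2-10b-3>0$. At $s=(n-2)/(b+1)\ge (4b+6)/(b+1)$ it equals $b^2s^2-(2b^2+b+2)s+b^2+1>0$. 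Hence $\rho(G_s)<n-b-2<\rho(G_1)$ for all admissible $s\ge 2$. For $b=1$, the upper endpoint needs exactly $n\ge 12=4b+8$.

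\textbf{Comparison with the paper's proof of Theorem~\ref{thm1.3}.} The decisive difference is your elimination of $T$, which works only because $a=1$. Removing $x$ from $T$ lowers the right-hand side of Corollary~\ref{corollary2} by $a$ plus the loss in $q$, while the left-hand side drops only by $d_{G-S}(x)$. For $a\ge 2$ this balance fails. Indeed, the extremal graph $G_n^a$ is an obstruction with $S=\emptyset$ and $|T|=a+1$ that cannot be converted into one with $T=\emptyset$.

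This is why the paper must keep the pair $(S,T)$ and choose it with $|S\cup T|$ maximal. It then kills configurations one by one using the non-edge bound of Lemma~\ref{le3.0} and Perron-vector edge switching (Lemma~\ref{le2.4}). The cost is quadratic lower bounds on $n$. For $a=1$, your reduction to the one-parameter family $K_s\vee(K_{n-(b+1)s-1}\cup(bs+1)K_1)$ makes all of that unnecessary. It leaves a single spectral comparison, which is why a linear threshold $n\ge 4b+8$ suffices.
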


For general $2\leq a<b$, we in this paper provide a complete solution of Problem \ref{p1}, and present a tight sufficient condition in terms of the spectral radius for a connected graph $G$ with $\delta(G) \geq a$ to contain a parity $[a,b]$-factor. Let $G_{n}^{a}$ be the graph obtained from $K_{a-1} \vee (K_{n-2a-1}\cup(a+1)K_1)$ by adding a new vertex and adding all possible edges between the added vertex and each vertex in $(a+1)K_1$ (See Fig. \ref{fig1}).
\begin{figure}[htbp]
  \centering
  \includegraphics[width=7cm]{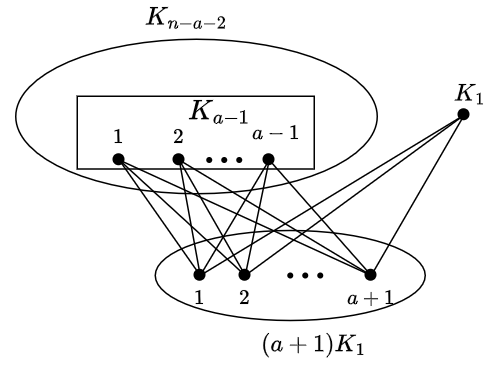}
  \caption{Graph $G_{n}^{a}.$}
  \label{fig1}
\end{figure}

\begin{thm}\label{thm1.3}
Let $a, b$, and $n$ be three positive integers such that $2\leq a < b$, $a \equiv b$ $(\rm{mod}~2)$, and $na$ is even. Suppose that $G$ is an $n$-vertex connected graph such that $n \geq \max\{ 2a^2 + 136a + 264, 2b^2 + 5ab + 7b + 34\}$ and $\delta(G) \geq a$. If
$$\rho(G) \geq \rho(G_{n}^{a}),$$ then $G$ contains a parity $[a,b]$-factor unless $G \cong G_{n}^{a}$.
\end{thm}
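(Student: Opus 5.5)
We argue by contradiction. Suppose $G$ has no parity $[a,b]$-factor and $\rho(G)\ge \rho(G_n^a)$. By Corollary \ref{corollary2} there are disjoint $S,T\subseteq V(G)$ with
$$\sum_{x\in T}d_{G-S}(x)\le a|T|-b|S|+q_G(S,T)-2.$$
Put $s=|S|$, $t=|T|$, $q=q_G(S,T)$. First we check that $G_n^a$ itself has no parity $[a,b]$-factor. Take $S$ to be the $K_{a-1}$ and $T$ the added vertex. Then $t=1$ and $d_{G-S}(x)=a+1$. Every isolated vertex of the $(a+1)K_1$ in $G-S-T$ has $a\cdot 1+1$ edges in the relevant count. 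Since $a\equiv b$ and $na$ is even, a parity check shows that this count gives enough $a$-odd components to violate the inequality (this depends on the parity of $a$; we would adjust the choice of $T$ if necessary). Next we show $T\neq\emptyset$. If $T=\emptyset$, then $q\ge bs+2$. For $s\ge1$ this would force many small components, and we compare with $K_s\vee(K_{n-s-q+1}\cup(q-1)K_1)$. Since $b>a$, that graph has spectral radius smaller than $\rho(G_n^a)\approx n-a-1$ once $n$ is large. The case $s=0$ would require $G$ to be disconnected.

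The main step is to bound the structure using $\delta(G)\ge a$ and the inequality. Each $x\in T$ satisfies $d_{G-S}(x)\ge a-s$, so $\sum_{x\in T}d_{G-S}(x)\ge t(a-s)$. This gives $q\ge t(a-s)-at+bs+2=bs-ts+2$ roughly. Each $a$-odd component with all vertices of small degree needs edges to $S\cup T$. We then split into two cases. In Case 1, $s+t$ is large, say $s+t\ge$ some constant depending on $a,b$. Here an edge count (using $e(G)$ and the bound $\rho\le\sqrt{2m-n+1}$, or a Hong-type bound with $\delta\ge a$) gives $\rho(G)<n-a-1<\rho(G_n^a)$; the thresholds $n\ge 2a^2+136a+264$ and $n\ge 2b^2+5ab+7b+34$ are meant to make this work. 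In Case 2, $s+t$ is bounded. Then $G$ is a spanning subgraph of an extremal graph obtained by making $S$ complete to everything, making $G-S-T$ consist of one big clique plus $q-1$ small (single-vertex) odd components, and letting $T$ have exactly the required neighbours. Adding edges only increases $\rho$, so we reduce to a family of graphs $G(s,t)$ with explicit parameters. We would also use that the inequality forces $s\le a-1$ when $t\ge1$; otherwise $bs>as+\dots$ leaves no room.

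The hardest part is comparing $\rho(G(s,t))$ with $\rho(G_n^a)$ over all admissible $(s,t)$, and showing that equality holds only for $s=a-1$, $t=1$, $q=a+1$. We would first try equitable quotient matrices to get characteristic polynomials. We would then use the double-eigenvector technique of the keywords, taking Perron vectors $x$ of $G(s,t)$ and $y$ of $G_n^a$ and evaluating $y^{T}(\rho(G_n^a)-\rho(G(s,t)))x=y^{T}(A(G_n^a)-A(G(s,t)))x$. The sign is determined by estimates on Perron entries: entries on the big clique are close to $1/\sqrt n$, and low-degree vertices have entries $O(1/n)$ times their degree. Having more vertices outside the big clique costs about $n$ in spectral radius per vertex, while gaining only $O(1)$ elsewhere. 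This forces the smallest configuration, namely one outside vertex $T$ joined to $a+1$ vertices, with $S$ as large as allowed ($a-1$). Finally, equality in the spectral condition together with edge-maximality gives $G\cong G_n^a$.
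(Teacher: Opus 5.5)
\textbf{Verdict: the proposal has genuine gaps.} The target configuration is wrong, the spectral threshold is wrong, and the reduction step is not monotone.

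\textbf{The witness and target configuration are wrong.} Your witness for $G_n^a$ is $S=V(K_{a-1})$ and $T=\{w\}$, with $w$ the added vertex. It fails for both parities of $a$.
\begin{itemize}
\item $G_n^a-S-T$ consists of $K_{n-2a-1}$, which has no edges to $T$, and $a+1$ isolated vertices, each with one edge to $T$.
\item Exactly one of these two types is $a$-odd. For $a$ even, $q=a+1$ and $\eta(S,T)=b(a-1)-a>0$. For $a$ odd, $q=1$ and $\eta(S,T)=b(a-1)>0$.
\end{itemize}
The correct witness is $S=\emptyset$ and $T$ equal to the $a+1$ vertices of degree $a$. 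Then both $K_{n-a-2}=K_{a-1}\vee K_{n-2a-1}$ and the apex vertex are $a$-odd, so $q=2$ and $\eta=-2$.

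Consequently, your goal of showing that equality occurs only for $s=a-1$, $t=1$, $q=a+1$ aims at a configuration that never certifies non-existence of a factor in $G_n^a$. Your heuristic ``$S$ as large as allowed'' is also backwards. Because of the term $-b|S|$ with $b\ge a+2$, the extremal analysis must prove $s=0$ (Lemma \ref{le3.8}), and this comes from an edge count, not from the inequality alone. Your claim that the inequality by itself forces $s\le a-1$ when $t\ge 1$ is unsupported, since a large $q$ permits a large $s$.

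\textbf{The threshold is wrong.} $G_n^a$ is $K_{n-a-2}$ plus $a+2$ low-degree vertices, so $n-a-3<\rho(G_n^a)<n-a-2$, not $\rho(G_n^a)\approx n-a-1$. Proving $\rho(G)<n-a-1$ in your Case~1 therefore gives no contradiction. You must rule out every bad graph with $\rho>n-a-3$. Via the Hong--Nikiforov bound, this means showing that each bad configuration other than $G_n^a$ has at least about $(a+2)n$ non-edges (Lemma \ref{le3.0}). Configurations with $|T\cup V(Q_2)|=a+2$ sit within $O(a^2)$ of that bound, so counting alone cannot finish.

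\textbf{The Case~2 reduction is not monotone.} Shrinking the small components to single vertices and prescribing the neighbourhoods of $T$ is not edge addition. Adding edges between $T$ and $G-S-T$ changes both $\sum_{x\in T}d_{G-S}(x)$ and the parities of the components.

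The missing devices, as the paper uses them, are:
\begin{enumerate}
\item Pass to a graph $G^*$ of maximum $\rho$ in the family.
\item Choose $(S,T)$ with $|S\cup T|$ maximum. This forces $d_{G^*-S}(x)\ge a+1$ on every $a$-odd component (Lemma \ref{le3.4}).
\item Use the non-edge bound to get $q=2$, $s=0$ and $|T\cup V(Q_2)|=a+2$.
\item Only then use Perron-vector edge shifting (Lemma \ref{le2.4}) and a double-eigenvector comparison to force $t=a+1$ and the exact neighbourhoods.
\end{enumerate}
Your $T=\emptyset$ step, via Lemmas \ref{le2.6} and \ref{le2.7} with $b\ge a+2$, is fine. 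The rest needs to be rebuilt around the correct witness and the threshold $n-a-3$.
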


We recognize that the graphs holding the bounds in Theorems \ref{thm1.2} and \ref{thm1.3}, are completely different for $1=a\leq b$ and general $2\leq a<b.$

For the reader's convenience, we often omit the subscript ``$G$'' if the context is clear.

\section{Preliminaries}
In this section, we introduce some notation and auxiliary results, which are crucial to the proof of our main result.

\begin{lem}[Brouwer and Haemers \cite{Brouwer2012}]\label{le2.1}
If $H$ is a subgraph of a connected graph $G$, then $\rho(H) \leq \rho(G)$ with equality if and only if $H\cong G$.
\end{lem}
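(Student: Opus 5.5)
The plan is to compare Rayleigh quotients using a nonnegative eigenvector of $A(H)$, and then read off the equality case from the Perron--Frobenius theorem for the connected graph $G$. Since $H$ may have fewer vertices than $G$, I will first pad $A(H)$ with zero rows and columns indexed by $V(G)\setminus V(H)$. Call the padded matrix $A'$. It is symmetric, has the same spectral radius as $A(H)$, and satisfies $0\le A'\le A(G)$ entrywise, because every edge of $H$ is an edge of $G$.

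\textbf{Inequality.} Even if $H$ is disconnected, the Perron--Frobenius theorem for nonnegative symmetric matrices gives a unit vector $x\ge 0$ with $A'x=\rho(H)x$. Then
$$\rho(H)=x^{T}A'x\le x^{T}A(G)x\le \max_{\|y\|=1}y^{T}A(G)y=\rho(G).$$
The first inequality holds because $A(G)-A'$ is entrywise nonnegative and $x\ge 0$. The second is the Rayleigh principle for the symmetric matrix $A(G)$.

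\textbf{Equality case.} Suppose $\rho(H)=\rho(G)$. Then both inequalities above are equalities.
\begin{itemize}
\item Equality in the Rayleigh principle for a symmetric matrix forces $x$ to be an eigenvector: $A(G)x=\rho(G)x$.
\item Since $G$ is connected, $\rho(G)$ is a simple eigenvalue whose eigenvectors are multiples of the positive Perron vector. As $x\ge 0$ and $x\ne 0$, it follows that $x>0$ entrywise.
\item The first equality gives $x^{T}(A(G)-A')x=0$. This is a sum of nonnegative terms $(A(G)-A')_{uv}x_ux_v$ with every $x_ux_v>0$, so $A(G)=A'$.
\end{itemize}
Hence $H$ has no zero-padded vertex that is incident to an edge of $G$. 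Since $G$ is connected, every vertex of $G$ is incident to an edge (the case $n=1$ is trivial), so $V(H)=V(G)$, and then $E(H)=E(G)$, i.e.\ $H\cong G$. The converse direction is immediate.

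The only delicate point is the equality case. One must not assume $H$ is connected, which is why I take the Perron vector of $H$ rather than of $G$. Positivity then comes from $G$ alone, through the simplicity of $\rho(G)$ for a connected graph.
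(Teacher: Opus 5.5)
Your proof is correct and complete. The paper gives no proof of its own here and simply cites Brouwer and Haemers, where the result comes from Perron--Frobenius monotonicity. Your argument is essentially that standard proof, specialized to symmetric matrices: take a nonnegative eigenvector of the zero-padded $A(H)$, note that equality forces it to be a $\rho(G)$-eigenvector, get positivity from the simplicity of $\rho(G)$ for connected $G$, and conclude $A(G)=A'$. It also correctly handles disconnected $H$ and vertices of $G$ that are missing from $H$.
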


In the following, we present a sharp upper bound on the spectral radius, and this bound was independently proposed by Hong et al. \cite{Hong} and Nikiforov \cite{Nikiforov}.

\begin{lem}[Hong et al.\cite{Hong},  Nikiforov \cite{Nikiforov}]\label{le2.2}
Let $G$ be a graph of order $n$ and size $m$ with positive minimum degree $\delta$. Then
$$\rho(G) \leq \frac{\delta-1}{2} + \sqrt{2m - n\delta + \frac{(\delta+1)^2}{4}},$$
with equality if and only if $G$ is either a $\delta$-regular graph or a bidegree graph in which each vertex is of degree either $\delta$ or $n-1$.
\end{lem}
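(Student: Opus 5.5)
The plan is to bound $\rho^2-(\delta-1)\rho$ by the constant row sums of the symmetric matrix $B=A^2-(\delta-1)A$, and then test this against a Perron vector of $G$; the equality case is read off from the single inequality used.

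\textbf{Step 1 (row sums of $B$).} Fix a vertex $u$. The row sum of $A^2$ at $u$ is $\sum_{w\sim u}d(w)$. Since the degrees sum to $2m$, and every vertex $w\neq u$ not adjacent to $u$ has $d(w)\ge\delta$,
$$\sum_{w\sim u}d(w)=2m-d(u)-\sum_{w\not\sim u,\,w\neq u}d(w)\le 2m-d(u)-(n-1-d(u))\delta=2m-(n-1)\delta+(\delta-1)d(u).$$
The row sum of $(\delta-1)A$ at $u$ is $(\delta-1)d(u)$. Subtracting, every row sum of $B$ is at most the constant $C:=2m-(n-1)\delta$.

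\textbf{Step 2 (testing against a Perron vector).} Take a nonnegative, nonzero eigenvector $x$ of $A$ for $\rho$; if $G$ is disconnected, use the Perron vector of a component attaining $\rho$, extended by zeros. Then $Bx=(\rho^2-(\delta-1)\rho)x$. Summing coordinates and using that $B$ is symmetric, so its column sums equal its row sums, gives
$$(\rho^2-(\delta-1)\rho)\sum_u x_u=\sum_u x_u\,(\text{row sum of }B\text{ at }u)\le C\sum_u x_u.$$
The key observation is that this needs no sign condition on the entries of $B$, which may be negative. It only uses $x\ge 0$ together with the row-sum bound. Hence $\rho^2-(\delta-1)\rho-C\le 0$. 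Solving this quadratic in $\rho$ gives
$$\rho\le\frac{\delta-1}{2}+\sqrt{\frac{(\delta-1)^2}{4}+2m-(n-1)\delta}=\frac{\delta-1}{2}+\sqrt{2m-n\delta+\frac{(\delta+1)^2}{4}}.$$

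\textbf{Step 3 (equality).} This is the only delicate point. For connected $G$ the Perron vector is positive, so equality forces every row-sum inequality in Step 1 to be tight. That means every vertex non-adjacent to any $u$ has degree exactly $\delta$. Consequently, any vertex of degree larger than $\delta$ is adjacent to all other vertices, so it has degree $n-1$. Thus $G$ is $\delta$-regular or a bidegree graph with degrees in $\{\delta,n-1\}$.

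For the converse, in both families every row sum of $B$ equals $C$. So the all-ones vector $\mathbf{1}$ satisfies $B\mathbf{1}=C\mathbf{1}$, while $A\mathbf{1}$ need not be a multiple of $\mathbf{1}$ in the bidegree case. I would handle this by checking equality directly from the quotient matrix of the equitable partition into degree classes. The disconnected case I would reduce to components, noting that the bound is monotone in the relevant quantities.

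I expect this equality analysis, in particular the bidegree direction and the disconnected case, to be the main obstacle. The inequality itself follows directly from Steps 1 and 2.
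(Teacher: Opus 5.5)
The paper quotes this lemma without proof, so there is no argument of its own to match. Your Steps 1--2 are correct and are essentially the classical argument for this bound. You use the degree-sum estimate $\sum_{w\sim u}d(w)\le 2m-(n-1)\delta+(\delta-1)d(u)$, then pair the eigen-equation of $B=A^2-(\delta-1)A$ with $\mathbf 1$, using symmetry so that negative entries of $B$ do not matter. Because you use a zero-extended Perron vector, this already proves the inequality for disconnected $G$, so no reduction to components is needed there. Your ``only if'' argument in Step 3 for connected $G$ is also correct.

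Two points about equality.

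First, the converse is not an obstacle. If every row sum of $B$ equals $C$, your Step 2 identity becomes an equality: $(\rho^2-(\delta-1)\rho)\sum_u x_u=\sum_u x_u(B\mathbf 1)_u=C\sum_u x_u$, so $\rho^2-(\delta-1)\rho=C$. The smaller root of this quadratic is at most $\frac{\delta-1}{2}$, while $\rho\ge\delta$, so $\rho$ is the larger root and equality holds. No quotient matrix is needed, and it does not matter that $A\mathbf 1$ is not a multiple of $\mathbf 1$.

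Second, and this is the real gap: the disconnected case of the ``only if'' direction cannot be completed by reducing to components, because the statement is false there. Take $G=K_{1,3}\cup K_2$. Then $n=6$, $m=4$, $\delta=1$ and $\rho=\sqrt3$, and the bound is $\sqrt{8-6+1}=\sqrt3$, so equality holds. Yet $G$ is neither $1$-regular nor has all degrees in $\{1,5\}$.

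Running your own Step 3 with $x$ supported on a component $H$ satisfying $\rho(H)=\rho(G)$ gives the correct statement. Tightness of the row sums at every $u\in V(H)$ forces all vertices outside $H$ to have degree $\delta$. It also forces every vertex of $H$ of degree larger than $\delta$ to be adjacent to all other vertices of $H$. So equality holds if and only if $G$ is the disjoint union of a $\delta$-regular graph (possibly empty) and a connected graph $H$ whose degrees all lie in $\{\delta,|V(H)|-1\}$. The converse follows by the same one-line computation as above.

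The equality clause as stated is therefore valid only for connected $G$, and that hypothesis should be added. This is harmless for the paper: Lemma~\ref{le3.0} uses only the inequality, and only for the connected graph $G^*$.
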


\begin{lem}[Hong et al.\cite{Hong}, Nikiforov \cite{Nikiforov}]\label{le2.3}
For a simple graph $G$ with order $n$ and size $m$, the function
$$f(x)=\frac{x-1}{2}+\sqrt{2m-nx+\frac{(x+1)^{2}}{4}}$$
is decreasing with respect to $x$ for $0\leq x\leq n-1.$
\end{lem}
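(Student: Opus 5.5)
The plan is to rewrite the expression under the square root so that the monotonicity becomes transparent, rather than differentiating directly. The key algebraic identity is
$$2m-nx+\frac{(x+1)^2}{4}=\Bigl(n-\frac{x+1}{2}\Bigr)^2-\bigl(n^2-n-2m\bigr),$$
which follows by expanding $\bigl(n-\frac{x+1}{2}\bigr)^2=n^2-n(x+1)+\frac{(x+1)^2}{4}$. Set
$$t=t(x)=n-\frac{x+1}{2}\qquad\text{and}\qquad c=n^2-n-2m.$$
Since $G$ is simple, $m\leq \binom{n}{2}$, so $c\geq 0$. Also $\frac{x-1}{2}=n-1-t$. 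Hence
$$f(x)=n-1+\Bigl(\sqrt{t^2-c}-t\Bigr).$$

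Next I would study the function $g(t)=\sqrt{t^2-c}-t$. For $0\leq x\leq n-1$ we have $t\geq n/2>0$, and $t$ is a strictly decreasing (affine) function of $x$. So it suffices to show that $g$ is nondecreasing in $t$ on $t>0$, wherever $t^2\geq c$. Rationalizing gives
$$g(t)=\frac{-c}{\sqrt{t^2-c}+t}.$$
Because $c\geq 0$ and the denominator is positive and increasing in $t$, the function $g$ is nondecreasing in $t$. It is strictly increasing when $c>0$, and identically $0$ when $c=0$, that is, when $G\cong K_n$. Composing with the decreasing map $x\mapsto t(x)$ shows that $f$ is decreasing in $x$, and strictly decreasing unless $G$ is complete.

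The only delicate point, which I expect to be the main obstacle, is the domain of $f$. For sparse $G$ the radicand $t^2-c$ can become negative for $x$ near $n-1$. I would handle this by stating the monotonicity on the set of $x\in[0,n-1]$ where the radicand is nonnegative. Since $t^2-c$ is decreasing in $x$ there, this set is an initial subinterval of $[0,n-1]$. In the application the function is evaluated at $x\leq\delta$, where the radicand is nonnegative because Lemma \ref{le2.2} gives it as a real upper bound. Note also that $2m\geq n\delta$, so at $x=\delta$ the radicand is at least $\frac{(\delta+1)^2}{4}>0$. Alternatively, one can verify $f'(x)\leq 0$ directly: this reduces to $\sqrt{t^2-c}\leq t$, which is exactly $c\geq 0$, i.e. $m\leq n(n-1)/2$.
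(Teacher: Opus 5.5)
The paper gives no proof of this lemma. It is quoted from Hong et al.\ and Nikiforov and used in the proof of Lemma~\ref{le3.0} to replace $\delta$ by $a\le\delta$. So there is no in-paper argument to compare against; judged on its own, your proof is correct.

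The customary justification is a derivative check. With $R(x)=2m-nx+\frac{(x+1)^2}{4}$ one gets $f'(x)=\frac12-\frac{2n-x-1}{4\sqrt{R(x)}}$. Since $2n-x-1>0$ on $[0,n-1]$, the condition $f'(x)\le 0$ squares to $4R(x)\le (2n-x-1)^2$, which is $2m\le n(n-1)$. That is your closing remark.

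Your main route writes $f(x)=n-1-\frac{c}{\sqrt{t^2-c}+t}$ with $t=n-\frac{x+1}{2}$ and $c=n^2-n-2m\ge 0$. It encodes the same inequality $m\le\binom{n}{2}$ without calculus. What it buys:
\begin{itemize}
\item The equality case is immediate: $f\equiv n-1$ exactly when $G\cong K_n$, and $f$ is strictly decreasing otherwise.
\item It covers the endpoint where $R$ vanishes and $f'$ is unbounded, with no separate limiting argument.
\end{itemize}

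Your domain caveat is a real correction to the statement as printed. At $x=n-1$ the radicand equals $2m-\frac34 n^2+n$, which is negative for every tree on $n\ge 4$ vertices. So the lemma only makes sense on the initial subinterval of $[0,n-1]$ where $R\ge 0$. As you note, this does not affect the paper's use: $R$ is decreasing, and $R(a)\ge R(\delta)\ge \frac{(\delta+1)^2}{4}>0$ because $2m\ge n\delta$.
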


The following lemma will be used in the proof of Theorem \ref{thm1.3}.

\begin{lem}[Wu et al.\cite{Wu2005}]\label{le2.4}
Let $G$ be a connected graph with $v_{i}, v_{j}\in V(G)$ and $S\subseteq N(v_{j})\backslash N(v_{i})$. Assume that $G'=G-\{v_{j}v|v\in S\}+\{v_{i}v|v\in S\}$ and $(x_{1},\ldots,x_{n})^{T}$ is the Perron vector of $A(G)$, where $x_{i}$ corresponds to $v_{i}$. If $x_{i}\geq x_{j}$, then $\rho(G')>\rho(G)$.
\end{lem}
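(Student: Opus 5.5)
The statement to prove is Lemma~\ref{le2.4}, the standard edge-rotation (Kelmans-type) lemma. I will use the Rayleigh quotient, with $\mathbf{x}=(x_1,\ldots,x_n)^T$ the Perron vector of $A(G)$, which is a positive unit vector.

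\textbf{Step 1: compare the quadratic forms.} Only the edges $v_jv$ with $v\in S$ are removed, and only the edges $v_iv$ with $v\in S$ are added. Since $S\subseteq N(v_j)\setminus N(v_i)$, the added edges are genuinely new, so $G'$ is simple. Hence
$$\mathbf{x}^TA(G')\mathbf{x}-\mathbf{x}^TA(G)\mathbf{x}=2\sum_{v\in S}x_v(x_i-x_j)\ge 0,$$
because $x_i\ge x_j$ and every $x_v>0$. By the Rayleigh principle,
$$\rho(G')\ge \mathbf{x}^TA(G')\mathbf{x}\ge \mathbf{x}^TA(G)\mathbf{x}=\rho(G).$$
One small remark: the vertex $v_i$ itself cannot lie in $S$, since $v_i\notin N(v_i)$.

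\textbf{Step 2: rule out equality.} This is the main obstacle. Suppose $\rho(G')=\rho(G)$. Then $\mathbf{x}$ attains the maximum of the Rayleigh quotient for $A(G')$, so $\mathbf{x}$ is an eigenvector of $A(G')$ for $\rho(G')$. Note that $G'$ need not be connected, but we do not need that; we only need the eigen-equation. We may assume $S\neq\emptyset$, since otherwise $G'=G$ and the claim is vacuous.

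Compare the eigen-equations at $v_j$ in the two graphs. In $G'$, the vertex $v_j$ has lost exactly the neighbours in $S$ (note $v_i\notin S$). This gives
$$\rho x_j=\sum_{u\sim_{G'} v_j}x_u=\sum_{u\sim_G v_j}x_u-\sum_{v\in S}x_v=\rho x_j-\sum_{v\in S}x_v.$$
So $\sum_{v\in S}x_v=0$, which contradicts the positivity of $\mathbf{x}$. Therefore the inequality is strict, $\rho(G')>\rho(G)$.

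Equivalently, one can compare at $v_i$: the equation there gains $\sum_{v\in S}x_v>0$, which gives the same contradiction. Either way the positivity of the Perron vector, guaranteed by the connectedness of $G$, is what forces strictness.
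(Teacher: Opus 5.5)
Your argument is correct and is the standard proof of this lemma (the paper gives no proof of its own and only cites Wu et al.): the Rayleigh quotient yields $\rho(G')\ge \mathbf{x}^TA(G')\mathbf{x}=\rho(G)+2(x_i-x_j)\sum_{v\in S}x_v\ge\rho(G)$, and equality would make $\mathbf{x}$ an eigenvector of $A(G')$, which the eigen-equation at $v_j$ (or at $v_i$) refutes because $\sum_{v\in S}x_v>0$. One small correction: the case $S=\emptyset$ is not vacuous but a genuine counterexample ($G'=G$), so $S\neq\emptyset$ has to be read as an implicit hypothesis of the lemma; it does hold in every application in the paper.
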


Let $M$ be a real $n \times n$ matrix. Assume that $M$ can be written as the following matrix
$$
M = \begin{pmatrix}
M_{1,1} & M_{1,2} & \cdots & M_{1,m} \\
M_{2,1} & M_{2,2} & \cdots & M_{2,m} \\
\vdots & \vdots & \ddots & \vdots \\
M_{m,1} & M_{m,2} & \cdots & M_{m,m}
\end{pmatrix},
$$
whose rows and columns are partitioned into subsets $X_1, X_2, \dots, X_m$ of $\{1,2,\dots,n\}$. The {\it quotient matrix} $R(M)$ of the matrix $M$ (with respect to the given partition) is the $m \times m$ matrix whose entries are the average row sums of the blocks $M_{i,j}$ of $M$. The above partition is called {\it equitable} if each block $M_{i,j}$ of $M$ has constant row (and column) sum.

\begin{lem}[Brouwer and Haemers \cite{Brouwer2012}, Godsil and Royle \cite{Godsil}, Haemers \cite{Haemers}]\label{le2.5}
Let $M$ be a real symmetric matrix and let $R(M)$ be its equitable quotient matrix. Then the eigenvalues of the quotient matrix $R(M)$ are eigenvalues of $M$. Furthermore, if $M$ is nonnegative and irreducible, then the spectral radius of $R(M)$ equals the spectral radius of $M$.
\end{lem}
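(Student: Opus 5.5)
The plan is to encode the partition by its characteristic matrix and reduce everything to the single matrix identity $MP=PR(M)$. Let $P$ be the $n\times m$ matrix with $P_{v,i}=1$ if $v\in X_i$ and $P_{v,i}=0$ otherwise. Write $R=R(M)$. The partition is equitable, so each block $M_{i,j}$ has constant row sum, and that constant is exactly $R_{ij}$. Hence, for $v\in X_i$,
$$(MP)_{v,j}=\sum_{u\in X_j}M_{v,u}=R_{ij}=(PR)_{v,j},$$
that is, $MP=PR$. The columns of $P$ are indicator vectors of nonempty disjoint sets, so $P$ has full column rank $m$; in particular $Py\neq 0$ whenever $y\neq 0$.

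For the first assertion, let $\theta$ be an eigenvalue of $R$ with eigenvector $y\neq 0$, taken over $\mathbb{C}$ if necessary, since $R$ need not be symmetric. Then $M(Py)=PRy=\theta\,Py$ and $Py\neq 0$. So $\theta$ is an eigenvalue of $M$, and $Py$ is an eigenvector of $M$ that is constant on each part $X_i$. This part uses only $MP=PR$, not symmetry.

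For the second assertion, assume $M$ is nonnegative and irreducible. Then $R$ is entrywise nonnegative, being made of averages of nonnegative entries.

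\textbf{Inequality $\rho(R)\le\rho(M)$.} Every eigenvalue of $R$ is an eigenvalue of $M$ by the first part. Hence $\rho(R)\le\rho(M)$.

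\textbf{Inequality $\rho(M)\le\rho(R)$.} Let $x$ be the positive Perron vector of $M$, given by Perron--Frobenius since $M$ is nonnegative and irreducible. Because $M$ is symmetric, transposing $MP=PR$ gives $P^{T}M=R^{T}P^{T}$. Therefore
$$R^{T}(P^{T}x)=P^{T}Mx=\rho(M)\,P^{T}x.$$
The vector $P^{T}x$ has $i$-th entry $\sum_{v\in X_i}x_v>0$, so it is nonzero. Thus $\rho(M)$ is an eigenvalue of $R^{T}$, hence of $R$, and $\rho(M)\le\rho(R)$.

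Combining the two inequalities gives $\rho(R)=\rho(M)$.

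An alternative for the second inequality avoids symmetry. Take a nonnegative Perron eigenvector $y$ of $R$ for $\rho(R)$. Then $Py$ is a nonnegative nonzero eigenvector of $M$. For an irreducible nonnegative $M$, the only eigenvalue admitting a nonnegative eigenvector is $\rho(M)$, which again gives equality.

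The only step needing care is keeping track of where symmetry and irreducibility enter. Symmetry is needed to transpose the intertwining relation, or irreducibility to invoke uniqueness of the nonnegative eigenvector. Both are routine, so I expect no real obstacle beyond verifying $MP=PR$ entrywise.
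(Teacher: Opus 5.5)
Your proof is correct. The paper gives no proof of this lemma; it only cites Brouwer--Haemers, Godsil--Royle and Haemers, and your argument via the characteristic matrix $P$ and the relation $MP=PR(M)$ (lifting eigenvectors by $y\mapsto Py$) is the standard one in those sources. For the spectral-radius part, your ``alternative'' (lifting a nonnegative Perron vector of $R(M)$ to the nonnegative eigenvector $Py$ of $M$) already yields $\rho(R(M))=\rho(M)$ in one step, without the transposition $P^{T}M=R(M)^{T}P^{T}$ or the separate inequality $\rho(R(M))\le\rho(M)$.
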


\begin{lem}[Zhao et al.\cite{Zhao}]\label{le2.6}
Let $n_1 \geq n_2 \geq \cdots \geq n_q$ be positive integers and $n = s + \sum_{i=1}^q n_i$. Then
$$\rho(K_s \vee (K_{n_1} \cup K_{n_2} \cup \cdots \cup K_{n_q})) \leq \rho(K_s \vee (K_{n-s-q+1} \cup (q-1)K_1)),$$ equality holds if and only if when $(n_1, n_2, \dots, n_q) = (n - s - q + 1, 1, \dots, 1)$.
\end{lem}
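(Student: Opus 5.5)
We prove Lemma \ref{le2.6} by repeatedly moving a single vertex out of the second-largest clique into the largest one and checking that each move strictly increases the spectral radius. Write $G=K_s\vee(K_{n_1}\cup\cdots\cup K_{n_q})$, and note that $G$ is connected whenever $s\ge 1$ or $q=1$. If $n_2=\cdots=n_q=1$, then $G$ is already the extremal graph and there is nothing to prove. Otherwise $n_2\ge 2$, and we will produce a graph of the same form, with the same $s$ and $q$, whose spectral radius is strictly larger. Iterating this finitely often reaches $K_s\vee(K_{n-s-q+1}\cup(q-1)K_1)$ and gives the inequality. Because every step is strict, equality can hold only if no step was performed, that is, only if $(n_1,\dots,n_q)=(n-s-q+1,1,\dots,1)$.

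\textbf{Step 1: structure of the Perron vector.} Let $x$ be the Perron vector of $A(G)$ and $\rho=\rho(G)$. Any permutation of the vertices inside one part ($K_s$ or a single $K_{n_i}$) is an automorphism of $G$, and the Perron vector is unique. Hence $x$ is constant on each part: write $y$ for its value on $K_s$ and $x_i$ for its value on $K_{n_i}$. The eigen-equation at a vertex of $K_{n_i}$ reads $\rho x_i=(n_i-1)x_i+sy$, so
$$x_i=\frac{sy}{\rho-n_i+1}.$$
Since $\rho\ge n_1\ge n_i$, every denominator is at least $1$, and therefore $n_1\ge n_2$ implies $x_1\ge x_2$.

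\textbf{Step 2: the vertex move.} Pick $w\in V(K_{n_2})$. Let $G'$ be obtained from $G$ by deleting the $n_2-1$ edges from $w$ to $K_{n_2}-w$ and adding all $n_1$ edges from $w$ to $K_{n_1}$. Then $G'\cong K_s\vee(K_{n_1+1}\cup K_{n_2-1}\cup\cdots)$, which still has $q$ cliques because $n_2-1\ge1$. By the Rayleigh quotient,
$$\rho(G')-\rho(G)\ \ge\ x^{T}A(G')x-x^{T}A(G)x\;=\;2x_2\bigl(n_1x_1-(n_2-1)x_2\bigr).$$
This is strictly positive, since $n_1\ge n_2>n_2-1$ and $x_1\ge x_2>0$. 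Hence $\rho(G')>\rho(G)$.

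Alternatively, the move can be realised as a sequence of edge shifts covered by Lemma \ref{le2.4}. The direct Rayleigh computation above is shorter and already gives strictness.

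After the move, re-sort the clique sizes in non-increasing order and repeat. The multiset of sizes strictly increases in dominance order at each step, so the process terminates, and it can only terminate at $(n-s-q+1,1,\dots,1)$.

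The only delicate point is Step 1: we need $x_1\ge x_2$, and strictness of the final inequality must be secured. The explicit formula for $x_i$ settles the first, and the strict numerical gap $n_1>n_2-1$ settles the second. So I do not expect a genuine obstacle here.
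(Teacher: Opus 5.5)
The paper does not prove this lemma. It quotes it from Zhao et al.\ and uses it only in Case~1 of Lemma~\ref{le3.7}, where $s\ge 1$, so there is no in-paper argument to compare against.

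Your proof is correct and self-contained for $s\ge 1$:
\begin{itemize}
\item Connectivity gives a unique positive Perron vector, and the automorphisms make it constant on each part.
\item The eigen-equation gives $x_i=sy/(\rho-n_i+1)$, and $\rho\ge s+n_1-1\ge n_1$, so $x_1\ge x_2$.
\item The computation $x^{T}A(G')x-x^{T}A(G)x=2x_2\bigl(n_1x_1-(n_2-1)x_2\bigr)>0$ is right and gives strictness directly.
\item Termination is immediate, since the largest clique grows by one at each step and is bounded by $n-s-q+1$.
\item Strictness of every step yields the equality characterization.
\end{itemize}
The paper uses two other tools for comparisons of this kind: the quotient-matrix and characteristic-polynomial computation of Lemma~\ref{le2.7}, and the edge-rotation Lemma~\ref{le2.4}. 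Your single Rayleigh-quotient step is cleaner than either, because it only needs the Perron vector of the original, fully symmetric graph.

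There are two small points to fix.

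First, the statement as written does not exclude $s=0$. For $q\ge 2$ the graph is then disconnected, and your Step~1 breaks down: the Perron vector is neither unique nor positive, so you cannot use $x_2>0$ or $x_1\ge x_2$. Handle this case separately. It is trivial, since $\rho=n_1-1\le n-q$, with equality if and only if $n_1=n-q+1$.

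Second, drop your side remark that the move can be realised as a sequence of shifts via Lemma~\ref{le2.4}. It is not justified as stated. After the first shift the graph loses its symmetry, so the hypothesis $x_i\ge x_j$ would have to be re-verified for a new Perron vector at each step. Your proof does not depend on this remark.
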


Next we prove an upper bound on the spectral radius of $K_s \vee (K_{n-b-s-1} \cup (b+1)K_1)$, which plays an important role in the proof of Theorem \ref{thm1.3}.

\begin{lem}\label{le2.7}
If $n, s, b$ are three positive integers with $n \geq \max\{2b,(b+1)s+1\}$, $s\geq 1$, and $b\geq 4$, then we have $\rho(K_s \vee (K_{n-b-s-1} \cup (b+1)K_1)) < n - b - 1$.
\end{lem}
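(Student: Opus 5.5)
We use the equitable partition $V=V(K_s)\cup V(K_{n-b-s-1})\cup V((b+1)K_1)$ together with Lemma~\ref{le2.5}. Let $G=K_s \vee (K_{n-b-s-1} \cup (b+1)K_1)$ and write $t=n-b-s-1$. This partition is equitable, and its quotient matrix is
$$R=\begin{pmatrix} s-1 & t & b+1\\ s & t-1 & 0\\ s & 0 & 0\end{pmatrix}.$$
By Lemma~\ref{le2.5}, $\rho(G)$ is the largest root of the characteristic polynomial $f(x)=\det(xI-R)$. Expanding along the last row gives
$$f(x)=x\bigl[(x-s+1)(x-t+1)-st\bigr]-s(b+1)(x-t+1).$$

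To prove $\rho(G)<n-b-1$, it suffices to show two things: $f(n-b-1)>0$, and $n-b-1$ exceeds every other real root of $f$. Set $\theta=n-b-1=s+t$. Then $\theta-t+1=s+1$ and $\theta-s+1=t+1$, so
$$f(\theta)=\theta\bigl[(t+1)(s+1)-st\bigr]-s(b+1)(s+1)=(s+t)(s+t+1)-s(s+1)(b+1).$$
Since $s+t=n-b-1$, we need $(n-b-1)(n-b)>s(s+1)(b+1)$. From $n\ge (b+1)s+1$ we get $s\le (n-1)/(b+1)$, and so $s(s+1)(b+1)\le (n-1)(n+b)/(b+1)$. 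Since $b\ge 4$ and $n\ge 2b$, we have $n-b\ge n/2$, so the left side is at least roughly $n^2/4$. The right side is at most roughly $n^2(1+\tfrac12)/5\cdot$ small, more precisely $\le (n-1)(3n/2)/5<3n^2/10$. This needs tightening: I would instead compare directly. We have $(n-b-1)(n-b)\ge (n-b-1)^2$, and $n-b-1\ge (b+1)s+1-b-1=(b+1)s-b$. Comparing $((b+1)s-b)(b+1)s$ with $s(s+1)(b+1)$ reduces to $(b+1)s-b>s+1$, that is, $bs>b+1$. This holds for $s\ge 2$. For $s=1$ we check directly: $(n-b-1)(n-b)>2(b+1)$ holds because $n-b\ge b\ge 4$. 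Making this clean inequality chain, with the cases $s=1$ and $s\ge2$ separated, is the main computational step.

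Next we show that $\theta$ lies to the right of all other real roots, which is the main obstacle. One route is to show that $f$ is increasing on $[\theta,\infty)$, or that the other two roots are small. Since $G$ contains $K_{s+t}$ as a subgraph, $\rho(G)\ge s+t-1=\theta-1$. So it suffices to show that $f$ has no root in $[\theta,\infty)$ other than possibly the largest one, and that the second largest root is less than $\theta-1$. For the second claim, note that the second eigenvalue of $R$ is at most $\lambda_2(G)$. By interlacing, or via the observation that deleting the $s$ join vertices leaves $K_t\cup (b+1)K_1$, we get $\lambda_2(G)\le \lambda_1(K_t\cup(b+1)K_1)=t-1$ for the relevant parts. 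I would instead verify it concretely: $f(\theta-1)=(\theta-1)[ts-st]$-type evaluation. Substituting $x=t-1$ gives $f(t-1)=(t-1)[-st]-s(b+1)\cdot 0<0$. Hence $f$ has a root above $t-1$, and $f(x)\to-\infty$ as $x\to-\infty$, so $f$ has a root below $t-1$. Checking the sign at one further point, such as $x=0$ with $f(0)=s(b+1)(t-1)>0$ when $t\ge 2$, locates one root in $(-\infty,0)$, one in $(0,t-1)$, and the largest root in $(t-1,\infty)$. Combined with $f(\theta)>0$ and $\theta>t-1$, this gives $\rho(G)<\theta=n-b-1$.
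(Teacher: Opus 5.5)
Your strategy works, but the $s\ge 2$ inequality chain is wrong as written, and your finishing step takes a different route from the paper's.

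\textbf{The $s\ge 2$ step.} The first half, showing $f(n-b-1)>0$, is the same computation as the paper's evaluation of $P(R,n-b-1)$, and your factored form $f(\theta)=(s+t)(s+t+1)-s(s+1)(b+1)$ agrees with it. The chain for $s\ge 2$, however, proves nothing. From $(n-b-1)(n-b)\ge (n-b-1)^2\ge((b+1)s-b)^2$ you go on to compare $((b+1)s-b)\cdot(b+1)s$ with $s(s+1)(b+1)$. That silently replaces one factor $(b+1)s-b$ by the larger $(b+1)s$.

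The needed bound $(n-b-1)(n-b)\ge((b+1)s-b)(b+1)s$ is actually false under the hypotheses. Take $b=4$, $s=2$, $n=11$: the left side is $42$ and the right side is $60$. So the reduction to $bs>b+1$ does not prove the claim.

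The repair is short. Since $x\mapsto x(x+1)$ is increasing for $x\ge 0$ and $n-b-1\ge (b+1)s-b\ge 1$, you get
\[
(n-b-1)(n-b)-s(s+1)(b+1)\ge ((b+1)s-b)((b+1)s-b+1)-s(s+1)(b+1)=b(b+1)s(s-2)+b(b-1),
\]
which is at least $b(b-1)>0$ for $s\ge 2$. This is exactly the paper's bound. Your $s=1$ case is fine.

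\textbf{The finishing step.} This part is correct and differs genuinely from the paper. The paper argues by contradiction. If $\lambda_1(R)\ge n-b-1$, then $P(R,n-b-1)>0$ forces $\lambda_2(R)>n-b-1$. Next, $P(R,n-b-2)=-s^2(b+1)<0$ forces $\lambda_3(R)>n-b-2$. The eigenvalue sum would then exceed $3n-3b-4>\operatorname{trace}(R)=n-b-3$.

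You instead locate all three roots directly. You have $f(0)=s(b+1)(t-1)>0$, $f(t-1)=-st(t-1)<0$, and $f(x)\to-\infty$ as $x\to-\infty$. This puts one root in $(-\infty,0)$ and one in $(0,t-1)$. So the largest root is the only root in $(t-1,\infty)$, and $f(t-1)<0<f(\theta)$ traps it in $(t-1,\theta)$.

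Your route avoids the trace and the contradiction. It also shows the roots are real, which the paper uses implicitly (they are eigenvalues of $A(G)$ by Lemma~\ref{le2.5}).

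The price is that you need $t\ge 2$, which you assume but never check. It does follow from the hypotheses. For $s=1$, $t=n-b-2\ge b-2\ge 2$ because $n\ge 2b$. For $s\ge 2$, $t=n-b-s-1\ge b(s-1)\ge 4$ because $n\ge (b+1)s+1$.

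Finally, delete the exploratory sentences before the $x=t-1$ computation. They are not used, and the interlacing remark is wrong as stated. Deleting the $s$ join vertices only yields $\lambda_{s+1}(G)\le t-1$ by Cauchy interlacing, not $\lambda_2(G)\le t-1$.
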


\begin{proof}
Note that $V(K_s \vee (K_{n-b-s-1} \cup (b+1)K_1)) = V(K_s) \cup V(K_{n-b-s-1}) \cup V((b+1)K_1)$ is an equitable partition of $V(K_s \vee (K_{n-b-s-1} \cup (b+1)K_1))$. Then the quotient matrix of $K_s \vee (K_{n-b-s-1} \cup (b+1)K_1)$ can be written as
$$R \triangleq R(A(K_s \vee (K_{n-b-s-1} \cup (b+1)K_1))) = \begin{pmatrix}
    s-1 & n-b-s-1 & b+1 \\
    s & n-b-s-2 & 0 \\
    s & 0 & 0
  \end{pmatrix}.$$
The characteristic polynomial of $R$ is
$$P(R, x) = x^3 - (n-b-3)x^2 - (n+bs+s-b-2)x -b^{2}s+bns-bs^{2}-3bs+ns-s^{2}-2s.$$
Let $\lambda_i(R)$ denote the $i$-th largest eigenvalue of $R$. If $s=1$, then $n\geq 2b$. Combining $b\geq 4$, we have
\begin{eqnarray*}
P(R, n-b-1)&=&n^2 - (2b+1)n + b^2 - b - 2 \geq b^2 - 3b - 2 > 0.
\end{eqnarray*}
If $s\geq 2$, then $n\geq (b+1)s+1$. Combining $b\geq 4$, we have
\begin{eqnarray*}
P(R, n-b-1)&=&n^2-(2b+1)n-bs^2+b^2-bs-s^2+b-s\\
&\geq& (b^2+b)s^2-2(b^2+b)s+b^2-b \geq b^2-b>0.
\end{eqnarray*}
Hence $P(R, n-b-1)>0$. Next we claim that $\lambda_1(R) < n-b-1$. Suppose to the contrary that $\lambda_1(R) \geq n-b-1$. Then we have $\lambda_2(R) > n-b-1$. Note that $P(R, n-b-2) = -bs^2 - s^2 < 0$. This implies that $\lambda_3(R) > n-b-2$, and hence $\lambda_1(R) + \lambda_2(R) + \lambda_3(R) > 3n - 3b - 4$. However, $\lambda_1(R) + \lambda_2(R) + \lambda_3(R) = \operatorname{trace}(R) = n - b - 3$, a contradiction. According to Lemma \ref{le2.5}, $\rho(K_s \vee (K_{n-b-s-1} \cup(b+1)K_1)) < n - b - 1$.
\end{proof}

Now we prove that $G_{n}^{a}$ has no parity $[a,b]$-factor.

\begin{lem}\label{le2.8}
If $a, b,$ and $n$ are three positive integers with $2\leq a < b$, $a \equiv b$ $(\rm{mod}~2)$, and even $na$, then $G_{n}^{a}$ has no parity $[a,b]$-factor.
\end{lem}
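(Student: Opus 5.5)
We use Corollary \ref{corollary2}: it suffices to find disjoint $S,T\subseteq V(G_n^a)$ with
$$\sum_{x\in T} d_{G-S}(x)\le a|T|-b|S|+q(S,T)-2 .$$
Label the vertices of $G_n^a$ as follows: $S_0=V(K_{a-1})$, $C=V(K_{n-2a-1})$, $I=V((a+1)K_1)$, and $w$ the added vertex, which is adjacent exactly to the $a+1$ vertices of $I$. Each vertex of $I$ has degree $(a-1)+1=a$.

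\textbf{First choice, $S=S_0$, $T=I$.} Then $\sum_{x\in T}d_{G-S}(x)=a+1$, since each $x\in I$ has exactly one neighbor (namely $w$) outside $S_0$. The right-hand side equals $a(a+1)-b(a-1)+q-2$. Because $b>a$, the term $-b(a-1)$ is too negative for $a\ge 2$, so this choice fails.

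\textbf{Second choice, $S=\emptyset$, $T=I\cup\{w\}$.} Now $T$ is not independent, so we compute the left-hand side directly. It is $\sum_{x\in I}d(x)+d(w)=a(a+1)+(a+1)=(a+1)^2$. The right-hand side is $a(a+2)+q-2$. This is hopeless unless $q$ is large, but $G-T$ is connected, so this also fails.

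\textbf{Third choice, $S=\emptyset$, $T=I$.} Then $\sum_{x\in T}d(x)=a(a+1)$ and $a|T|=a(a+1)$. The required inequality becomes $0\le q(\emptyset,I)-2$, so we need $q\ge 2$.

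The components of $G-I$ are $\{w\}$ and $S_0\cup C$, the latter being connected because $a\ge 2$. We check the parity of each:
\begin{itemize}
\item For $Q=\{w\}$: $a\cdot 1+|[\{w\},I]|=a+(a+1)=2a+1$, which is odd, so $\{w\}$ is $a$-odd.
\item For $Q=S_0\cup C$: $a(n-a-2)+|[Q,I]|$. Here $|[Q,I]|=(a+1)(a-1)$, since each vertex of $I$ sends $a-1$ edges to $S_0$ and none to $C$. So the quantity is $a(n-a-2)+a^2-1=an-2a-1$. Since $an$ is even, this is odd, so $Q$ is $a$-odd as well.
\end{itemize}
Hence $q=2$, and the inequality $a(a+1)\le a(a+1)+2-2$ holds with equality.

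By Corollary \ref{corollary2}, $G_n^a$ has no parity $[a,b]$-factor. The proof uses only $a\ge 2$ (for connectivity of $S_0\cup C$) and the parity assumptions. The main step is finding the witnessing pair $(\emptyset,I)$; once it is found, verifying the parity of the two components is routine.
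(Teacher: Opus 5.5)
Your proof is correct and is the paper's argument: both take $S=\emptyset$ and $T=V((a+1)K_1)$, note that $\sum_{x\in T}d_G(x)=a(a+1)=a|T|$, and use $q_G(\emptyset,T)=2$ to get $\eta(S,T)=-2$. Your explicit parity check of the two components $\{w\}$ and $K_{a-1}\vee K_{n-2a-1}$ supplies the detail the paper dismisses with ``clearly''. The two failed trial choices can be dropped from a write-up, and that second component is a clique $K_{n-a-2}$, so its connectivity does not actually need $a\ge 2$.
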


\begin{proof}
 Recall that $G_{n}^{a}$ is the graph obtained from $K_{a-1} \vee (K_{n-2a-1}\cup(a+1)K_1)$ by adding a new vertex and adding all possible edges between the added vertex and each vertex in $(a+1)K_1$. Let $V_1 = V(K_1)$, $V_2 = V((a+1)K_1)$, and $V_3 = V(K_{n-a-2})$. Take $S = \emptyset$ and $T = V_2$. Clearly, $q_G(S,T) = 2$. Therefore, one can obtain that
$$b|S| - a|T| + \sum_{x \in T} d_{G-S}(x) - q_G(S,T)= -a(a+1) + (a+1)(a-1) + a+1 - 2 = -2\leq-2,$$
and hence
$$ \sum_{x \in T} d_{G-S}(x) \leq a|T|-b|S|+ q_{G}(S,T) -2.$$
By Corollary \ref{corollary2}, $G_{n}^{a}$ has no parity $[a,b]$-factor.
\end{proof}

\section{Proofs}

Suppose that $a, b$, and $n$ are three positive integers such that $2\leq a < b$, $a \equiv b$ $(\rm{mod}~2)$, $na$ is even, and $n \geq \max\{ 2a^2 + 136a + 264, 2b^2 + 5ab + 7b + 34\}$.
Let $\mathcal{G}_n^{a,b}$ be the family of $n$-vertex connected graphs $G$ such that $\delta(G) \geq a$ and $G$ has no parity $[a,b]$-factor. Suppose that $G^*$ has the maximum spectral radius among graphs in $\mathcal{G}_n^{a,b}$. Let $\boldsymbol{x} = (x_u)_{u\in V(G^*)}$ be the Perron vector of $A(G^*)$. In the following, we write $\rho(G^*)$ as $\rho$ for short. Note that $G^*$ has no parity $[a,b]$-factor. By Corollary \ref{corollary2}, there exist two vertex-disjoint subsets $S,T$ of $V(G^*)$ satisfying
$$\sum_{x \in T} d_{G^*-S}(x) \leq a|T| - b|S| + q_{G^*}(S,T) - 2.$$
We choose $S$ and $T$ such that $|S\cup T|$ is maximized. For convenience, let $ |S|= s$, $ |T|=t$, and $ q_{G^*}(S,T) =q$. It follows that
\begin{eqnarray}\label{eq2}
\sum_{x\in T} d_{G^*-S}(x) \leq at - bs + q - 2.
\end{eqnarray}
Let $Q_1, Q_2, \dots, Q_q$ be the $a$-odd components of $G^*-S-T$, and let $Q_{q+1}, Q_{q+2}, \dots, Q_{q'}$ be the other components of $G^*-S-T$. Furthermore, we denote by $n_i$ the number of vertices of $Q_i$ for each $i\in \{1,2,\dots, q'\}$. Without loss of generality, we assume that $n_1 \geq n_2 \geq \cdots \geq n_q$.
By Lemma \ref{le2.8}, the graph $G_{n}^{a}$ belongs to $\mathcal{G}_n^{a,b}$. Note that $K_{n-a-2}$ is a proper subgraph of $G_{n}^{a}$. Then $\rho\geq\rho(G_{n}^{a}) > \rho(K_{n-a-2}) = n-a-3$. Now we prove an upper bound for $|E(\overline{G^*})|$. For convenience, let $|E(\overline{G^*})|=\overline{m}$ while $|E({G^*})|=m$ and we use $\delta$ instead of $\delta(G^*)$.
\begin{lem}\label{le3.0}
$\overline{m} < (a+2)n - a^2 - \frac{7}{2}a - 3$.
\end{lem}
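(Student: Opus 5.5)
We combine the lower bound $\rho > n-a-3$ with the Hong--Nikiforov upper bound (Lemma \ref{le2.2}), using Lemma \ref{le2.3} to pass from $\delta$ to $a$. Since $\delta \geq a$ and $f(x)$ is decreasing on $[0,n-1]$, we get
$$n-a-3 < \rho \leq \frac{\delta-1}{2}+\sqrt{2m-n\delta+\frac{(\delta+1)^2}{4}} \leq \frac{a-1}{2}+\sqrt{2m-na+\frac{(a+1)^2}{4}}.$$
The first step is to check that $n-a-3-\frac{a-1}{2} = n-\frac{3a+5}{2}>0$, which holds because $n$ is large. We may then square both sides:
$$\Big(n-\tfrac{3a+5}{2}\Big)^2 < 2m - na + \frac{(a+1)^2}{4}.$$

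Next we substitute $m=\binom{n}{2}-\overline{m}$ and solve for $\overline{m}$. Expanding the left side gives $n^2-(3a+5)n+\frac{(3a+5)^2}{4}$. Moving terms across yields
$$2\overline{m} < n^2-n-na+\frac{(a+1)^2}{4}-n^2+(3a+5)n-\frac{(3a+5)^2}{4} = (2a+4)n - \frac{(3a+5)^2-(a+1)^2}{4}.$$
The last fraction simplifies to $\frac{(2a+4)(4a+6)}{4}=(a+2)(2a+3)=2a^2+7a+6$. Halving both sides gives
$$\overline{m} < (a+2)n - a^2-\tfrac72 a-3,$$
which is exactly the claimed bound.

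The only delicate point is the step that invokes Lemma \ref{le2.3}. That lemma requires $0\le a\le\delta\le n-1$, which holds. We use only its monotonicity, not the equality case. The argument is otherwise routine algebra, so there is no real obstacle beyond verifying the sign condition before squaring.
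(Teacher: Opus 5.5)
Your proposal is correct and follows the paper's proof: it combines $\rho > n-a-3$ with Lemmas \ref{le2.2} and \ref{le2.3} evaluated at $x=a$, squares, and solves for $\overline{m}$ via $m=\binom{n}{2}-\overline{m}$, and your algebra matches the paper's. Your explicit check that $n-\frac{3a+5}{2}>0$ before squaring is a detail the paper leaves implicit.
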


\begin{proof}
Note that $\delta\geq a$ and $\rho> n-a-3$. By Lemmas \ref{le2.2} and \ref{le2.3}, we obtain that $n-a-3<\rho \leq \frac{a-1}{2} + \sqrt{2m - na + \frac{(a+1)^2}{4}}$, and hence $m > \frac{n^2}{2} - \left(a+\frac{5}{2}\right)n + a^2 + \frac{7}{2}a + 3$, which implies that
\begin{eqnarray*}
\overline{m} &<& \frac{n(n-1)}{2} - \left[\frac{n^2}{2} - \left(a+\frac{5}{2}\right)n + a^2 + \frac{7}{2}a + 3\right] \\
&=& (a+2)n - a^2 - \frac{7}{2}a - 3,
\end{eqnarray*}
which gives the desired result.
\end{proof}

Before proving $G^* \cong G_{n}^{a}$, which is the main goal,
we first prove the following several lemmas, which are very crucial to characterizing the specific structure of $G^*$.

\begin{lem}\label{le3.1}
$d_{G^*}(x)=n-1$ for any $x\in S$ and $Q_i$ is a complete graph for each $i\in\{1,2,\dots,q'\}$.
\end{lem}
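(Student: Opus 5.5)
We argue by edge addition and maximality. Suppose first that some $x\in S$ has $d_{G^*}(x)<n-1$, and pick $y\notin N_{G^*}[x]$. Let $G'=G^*+xy$. Then $G'$ is connected, $\delta(G')\ge\delta(G^*)\ge a$, and $G^*$ is a proper spanning subgraph of $G'$. So Lemma \ref{le2.1} gives $\rho(G')>\rho$. We will show that the same pair $(S,T)$ still witnesses the failure of the condition in Corollary \ref{corollary2} for $G'$. Then $G'\in\mathcal{G}_n^{a,b}$, which contradicts the maximality of $\rho$.

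The check uses the fact that every edge of $G'$ incident to $S$ is invisible on both sides of the inequality. The quantity $\sum_{x\in T}d_{G'-S}(x)$ equals $\sum_{x\in T}d_{G^*-S}(x)$, since the new edge has an endpoint in $S$. The graph $G'-S-T$ equals $G^*-S-T$, so its components are unchanged. For each component $Q$, the count $|[V(Q),T]|$ is also unchanged, because the new edge joins $S$ to something and never joins $Q$ to $T$. Hence $q_{G'}(S,T)=q_{G^*}(S,T)$, and inequality \eqref{eq2} holds verbatim for $G'$. Therefore $G'$ has no parity $[a,b]$-factor.

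For the second claim, suppose some $Q_i$ is not complete, and let $u,v\in V(Q_i)$ be nonadjacent. Set $G'=G^*+uv$. Connectivity and the minimum degree condition are preserved, and again $\rho(G')>\rho$ by Lemma \ref{le2.1}. Both $u$ and $v$ lie outside $S\cup T$, so the degrees $d_{G'-S}(x)$ for $x\in T$ do not change. The component structure of $G'-S-T$ is the same as before: $Q_i$ simply gains an internal edge and keeps its vertex set. The number of edges from each component to $T$ is likewise unchanged, so each component keeps its $a$-parity and $q_{G'}(S,T)=q$. Thus \eqref{eq2} holds for $G'$, so $G'\in\mathcal{G}_n^{a,b}$, which contradicts the choice of $G^*$. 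This argument applies to every component $Q_i$, $a$-odd or not, for $1\le i\le q'$.

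The only delicate point is confirming that $q$ is invariant in the first case, namely that an added edge from $S$ to $T$ or to a component is genuinely irrelevant. This follows from the definitions: $q_G(S,T)$ depends only on $G-S$. Everything else is immediate, and the maximality of $|S\cup T|$ is not needed here.
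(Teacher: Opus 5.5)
Your proposal is correct and follows essentially the same argument as the paper. The paper adds all the missing edges at once and uses the maximality of $\rho$ together with Lemma \ref{le2.1} to conclude $G'\cong G^*$; you add one edge at a time and derive a contradiction, relying on the same invariance of $\sum_{x\in T}d_{G-S}(x)$ and $q_G(S,T)$.
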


\begin{proof}
Let $G'$ be the graph obtained from $G^*$ by adding edges such that $d_{G'}(x) = n-1$ for any $x \in S$ and $Q_i$ is a complete graph for each $i \in \{1,2,\dots,q'\}$. Note that $\delta(G') \geq a$, $q_{G'}(S,T) = q$, and $\sum_{x\in T} d_{G'-S}(x)= \sum_{x\in T} d_{G^*-S}(x)$. By \eqref{eq2}, we have
$$
\sum_{x\in T} d_{G'-S}(x) = \sum_{x\in T} d_{G^*-S}(x) \leq at - bs + q - 2 = at - bs + q_{G'}(S,T) - 2.
$$
By Corollary \ref{corollary2}, $G'$ has no parity $[a,b]$-factor, and hence $G' \in \mathcal{G}_n^{a,b}$. According to the maximality of $\rho$, we have $\rho(G') \leq \rho$. Note that $G^*$ is a spanning subgraph of $G'$. By Lemma \ref{le2.1}, we can deduce that $G^* \cong G'$, which gives the desired result.
\end{proof}

\begin{lem}\label{le3.2}
$S \cup T \neq \emptyset$.
\end{lem}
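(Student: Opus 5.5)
Suppose for contradiction that $S=T=\emptyset$. Then \eqref{eq2} reads $0\leq q-2$, so $q\geq 2$. Here $q$ counts the $a$-odd components of $G^*$ itself. Since $T=\emptyset$, a component $Q$ is $a$-odd exactly when $a|V(Q)|$ is odd.

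The plan is to derive a contradiction from connectivity and parity.

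\emph{Step 1.} Since $G^*$ is connected, $G^*-S-T=G^*$ has exactly one component, namely $G^*$ itself. Hence $q\leq 1$, which contradicts $q\geq 2$.

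\emph{Step 2 (alternative route).} Even without using connectivity, parity rules this out. By Lemma \ref{le3.1}, each component $Q_i$ is complete. The component $G^*$ is $a$-odd only if $an$ is odd, but $na$ is even by hypothesis. So in fact $q=0$.

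Either observation contradicts $q\geq 2$. Therefore $S\cup T\neq\emptyset$.

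No real obstacle is expected. The only care needed is to note that with $T=\emptyset$ the $a$-odd condition reduces to $a|V(Q)|$ being odd, and that the single component of the connected graph $G^*$ has order $n$ with $an$ even. The same conclusion also follows from the congruence \eqref{eq1}: at $S=T=\emptyset$ it gives $q\equiv an\equiv 0 \pmod 2$, and combined with $q\leq 1$ this forces $q=0$.
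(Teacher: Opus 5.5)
Your Step 1 is exactly the paper's argument: with $S=T=\emptyset$, \eqref{eq2} forces $q\geq 2$, which is impossible because the connected graph $G^*$ has only one component. One small correction: Step 2 does not actually avoid connectivity. It treats $G^*$ as the single component of $G^*-S-T$, and for a disconnected graph parity only gives that $q$ is even, which is compatible with $q\geq 2$. The appeal to Lemma \ref{le3.1} there is also unnecessary.
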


\begin{proof}
Assume that $S \cup T = \emptyset$. Combining \eqref{eq2} and $\sum_{x \in T} d_{G^*-S}(x)\geq 0$, we can obtain that $q \geq 2$, which contradicts that $G^*$ is connected.
\end{proof}

Furthermore, we can derive an upper bound on the number of vertices in $T$.
\begin{lem}\label{le3.3}
$t \leq \frac{1}{8}n$.
\end{lem}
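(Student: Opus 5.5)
Suppose to the contrary that $t > \frac{1}{8}n$. The idea is to show that the non-edges of $G^*$ forced inside $T$ and between $T$ and the components of $G^*-S-T$ exceed the bound $\overline{m} < (a+2)n - a^2 - \frac{7}{2}a - 3$ of Lemma \ref{le3.0}.

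First I bound $q$. Each $a$-odd component $Q_i$ contains at least one vertex, and these components are disjoint from $S \cup T$, so $q \leq n - s - t$. Inequality \eqref{eq2} then gives
$$\sum_{x\in T} d_{G^*-S}(x) \leq at - bs + q - 2 \leq at + n - s - t - 2 < (a-1)t + n.$$
Since each vertex of $T$ has degree at least $n-1-s-t$ outside $S$ in a near-complete graph, this already suggests that $T$ misses many edges. Concretely, the number of non-edges of $G^*$ with at least one endpoint in $T$, not counting edges to $S$, is at least
$$\frac{1}{2}\sum_{x\in T}\bigl(n-1-s-d_{G^*-S}(x)\bigr) \geq \frac{1}{2}\Bigl(t(n-1-s) - (a-1)t - n\Bigr).$$
The factor $\frac12$ accounts for non-edges with both ends in $T$.

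Next I need $s$ to be small, so that $n-1-s$ is close to $n$. From \eqref{eq2} and $\sum_{x\in T} d_{G^*-S}(x)\geq 0$ we get $bs \leq at + q - 2 \leq at + n$. I will split into two cases.

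\textbf{Case $s$ small} (say $s\leq n/4$). Here $t(n-1-s)/2 \gtrsim \frac{n}{8}\cdot\frac{3n}{8}$, which is quadratic in $n$. This exceeds $(a+2)n$ once $n \gg a$, and the hypothesis $n \geq 2a^2+136a+264$ should make the arithmetic work.

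\textbf{Case $s$ large.} Then $bs \leq at+n$ forces $t$ large as well. In that regime I will use the sharper count
$$\sum_{x\in T} d_{G^*-S}(x) \leq at - bs + q - 2,$$
with $b>a$, together with $q \leq n-s-t$. Since every vertex of $T$ then has small degree in $G^*-S$, the number of non-edges between $T$ and $V\setminus(S\cup T)$ plus those inside $T$ is again of order $t(n-s-t)$ or $t^2$. Both are quadratic in $n$ when $t>n/8$ and $n-s-t$ is not tiny. If $n-s-t$ is tiny, the non-edges inside $T$ alone, about $t^2/2 - (a-1)t/2 - n/2$, already suffice.

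The main obstacle is balancing these regimes cleanly. The simplest route is probably the following. Write $D=\sum_{x\in T} d_{G^*-S}(x)$. Every $x\in T$ has exactly $n-1-s-d_{G^*-S}(x)$ non-neighbours in $V\setminus S$, so
$$\overline{m} \geq \frac{1}{2}\bigl(t(n-1-s) - D\bigr) \geq \frac{1}{2}\bigl(t(n-1-s) - at + bs - q + 2\bigr).$$
Substituting $q\leq n-s-t$ gives
$$\overline{m} \geq \frac{1}{2}\bigl(t(n-2-s-a) + (b+1)s - n + 2\bigr).$$
The right-hand side is linear in $s$ with coefficient $b+1-t$. I will evaluate it at the extreme values of $s$ allowed by $s \leq n-t$, and use $t>n/8$ to show that it exceeds $(a+2)n$ in each extreme. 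This contradicts Lemma \ref{le3.0}. If the $s$ close to $n-t$ endpoint fails, I will instead use $bs\leq at+n$ to cap $s\leq (a t + n)/b$ before evaluating.
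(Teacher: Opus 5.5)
Your final route, the bound that is affine in $s$, is correct, but it is not the paper's argument.

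The paper counts only the non-edges inside $T$, which number at least $\binom{t}{2}-\frac12\sum_{x\in T}d_{G^*-S}(x)$, together with $q-1$ non-edges among the $a$-odd components $Q_1,\dots,Q_q$. After inserting \eqref{eq2}, the $-\frac12 q$ is absorbed by the $+(q-1)$, and the $bs$ term enters with a favourable sign, so
\[
\overline{m}\geq \frac{t(t-1)}{2}-\frac{at}{2}+\frac{bs}{2}+\frac{q}{2}>\frac{t(t-a-1)}{2}-\frac12 .
\]
This no longer depends on $s$ or $q$, and $t>\frac n8$ together with $n\geq 136a+264$ finishes the proof in one line.

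You instead count all non-edges from $T$ into $V(G^*)\setminus S$ and bound $q$ crudely by $n-s-t$. That creates the $-ts$ term and forces the endpoint analysis in $s$. The analysis does go through, although you did not carry it out:
\begin{itemize}
\item The correct expansion is $\frac12\bigl(t(n-s-a)+(b+1)s-n+2\bigr)$. Your version with $n-2-s-a$ is a slip, but a harmless one, since it only weakens the bound.
\item At $s=0$ the value is $\frac12t(t-a-1)+\frac12(t-1)(n-t)+1$.
\item At $s=n-t$ the value is $\frac12t(t-a-1)+\frac12b(n-t)+1$.
\end{itemize}
Both values are at least $\frac12t(t-a-1)+1$, which is essentially the paper's quantity. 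So the same arithmetic closes the proof. The fallback via $bs\le at+n$ and the preliminary ``$s$ small/large'' heuristics are unnecessary.

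In short, pairing the $q$ in \eqref{eq2} with non-edges between odd components is what lets the paper avoid both the estimate $q\le n-s-t$ and any dependence on $s$. Your count is stronger when $s$ is small (order $tn$ rather than $t^2$), but that extra strength is not needed here.
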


 \begin{proof}
Assume to the contrary that $t > \frac{1}{8}n$. We consider the non-edges in $T$, together with the non-edges among $a$-odd components $Q_1, Q_2,\dots, Q_q$. By (\ref{eq2}), $t > \frac{1}{8}n$, and $n \geq 2a^2 + 136a + 164$, we have

\begin{eqnarray*}
\overline{m}&\geq&\frac{t(t-1)}{2} - \frac{1}{2}\sum_{x\in T} d_{G^*-S}(x) + q - 1 \geq \frac{t(t-1)}{2} - \frac{1}{2}(at - bs + q - 2) + q - 1 \\
&=& \frac{t(t-1)}{2} - \frac{1}{2}at + \frac{1}{2}bs + \frac{1}{2}q>\frac{t(t-1)}{2} - \frac{1}{2}at - \frac{1}{2} \\
&>& \frac{1}{128}n^2 - \left(\frac{a}{16} + \frac{1}{16}\right)n - \frac{1}{2}> (a+2)n - a^2 - \frac{7}{2}a - 2,
\end{eqnarray*}
which contradicts Lemma \ref{le3.0}.
\end{proof}

Now we can estimate the vertex degree of $a$-odd components in $G^*-S.$
\begin{lem}\label{le3.4}
$d_{G^*-S}(x) \geq a+1$ for any $x \in V(Q_1 \cup Q_2 \cup \cdots \cup Q_q)$.
\end{lem}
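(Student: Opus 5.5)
The plan is to argue by contradiction, using the choice of $S$ and $T$ with $|S\cup T|$ maximum: if some vertex of an $a$-odd component had small degree in $G^*-S$, we could move it into $T$ and still violate the condition of Corollary \ref{corollary2}. So suppose there is $x\in V(Q_i)$ for some $i\in\{1,\dots,q\}$ with $d_{G^*-S}(x)\le a$. Set $T'=T\cup\{x\}$ and keep $S$ unchanged. We will compare
$$\eta(S,T)=b|S|-a|T|+\sum_{y\in T}d_{G^*-S}(y)-q_{G^*}(S,T)$$
with the corresponding quantity $\eta(S,T')$.

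\textbf{The three terms.} Since $x$ lies outside $S$ both before and after the move, $d_{G^*-S}(y)$ is unchanged for every $y\in T$. Hence $\sum_{y\in T'}d_{G^*-S}(y)$ exceeds $\sum_{y\in T}d_{G^*-S}(y)$ by exactly $d_{G^*-S}(x)\le a$. The term $-a|T'|$ decreases by exactly $a$. For the components, every component of $G^*-S-T$ other than $Q_i$ is still a component of $G^*-S-T'$. Its vertex set is unchanged, and it has no edges to $x$, so its number of edges to $T'$ equals its number of edges to $T$. Thus its $a$-parity status is unchanged. By Lemma \ref{le3.1}, $Q_i$ is complete, so $Q_i-x$ is either empty or a single complete component. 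Therefore the number of $a$-odd components can drop by at most one, namely $q_{G^*}(S,T')\ge q-1$.

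\textbf{Conclusion.} Combining these, $\eta(S,T')\le \eta(S,T)-a+a+1=\eta(S,T)+1$. Since $S,T$ satisfy \eqref{eq2}, we have $\eta(S,T)\le -2$, so $\eta(S,T')\le -1$. The parity identity \eqref{eq1} holds for every pair of disjoint subsets, so $\eta(S,T')$ is even and hence $\eta(S,T')\le -2$. Thus $S,T'$ also satisfy the inequality in Corollary \ref{corollary2}, while $|S\cup T'|>|S\cup T|$. This contradicts the maximality of $|S\cup T|$, and so $d_{G^*-S}(x)\ge a+1$ for every $x\in V(Q_1\cup\cdots\cup Q_q)$.

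The only delicate step is the component count. We cannot control whether $Q_i-x$ is $a$-odd, so we should not try to; the bound $q_{G^*}(S,T')\ge q-1$ is all that is needed. It relies on the completeness of $Q_i$ from Lemma \ref{le3.1}, which guarantees that deleting $x$ does not split $Q_i$ into several pieces. The loss of one unit in $q$ is then absorbed by the parity fact \eqref{eq1}.
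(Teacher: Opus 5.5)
Your proof is correct and follows the paper's argument: move the low-degree vertex $x$ into $T$, use $q_{G^*}(S,T')\ge q-1$ to get $\eta(S,T')\le\eta(S,T)+1\le -1$, upgrade this to $\eta(S,T')\le -2$ via the parity identity \eqref{eq1}, and contradict the maximality of $|S\cup T|$. The completeness of $Q_i$ from Lemma \ref{le3.1} is not actually needed, because the other $q-1$ $a$-odd components are unaffected by the move, so $q_{G^*}(S,T')\ge q-1$ holds however $Q_i-x$ splits.
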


\begin{proof}
Assume to the contrary that there exists some vertex $x \in V(Q_1 \cup Q_2 \cup \cdots \cup Q_q)$ such that $d_{G^*-S}(x) \leq a$.
Let $T' = T \cup \{x\}$. Then we can obtain that
\begin{eqnarray*}
\eta(S,T') &=& bs - a|T'| + \sum_{u\in T'} d_{G^*-S}(u) - q_{G^*}(S,T') \\
&=& bs - a(t+1) + \sum_{u\in T} d_{G^*-S}(u) + d_{G^*-S}(x) - q_{G^*}(S,T') \\
&\leq& bs - at + \sum_{u\in T} d_{G^*-S}(u) - \left(q - 1\right) \\
&\leq& -1.
\end{eqnarray*}
By \eqref{eq1}, we have
$\eta(S,T') \leq -2$, which implies that $$\sum_{x\in T'} d_{G^*-S}(x) \leq a|T'| - bs + q_{G^*}(S,T') - 2.$$ Note that $|S\cup T'|=|S\cup T|+1$. This contradicts the maximality of $|S\cup T|$.
\end{proof}

Next, our goal is to determine that all components in $G^*-S-T$ are $a$-odd components.

\begin{lem}\label{le3.5}
$q\geq 2$.
\end{lem}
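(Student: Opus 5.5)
We argue by contradiction: assume $q\le 1$, show this forces $T$ to contain many vertices of small degree, and then count the resulting non-edges against the bound of Lemma \ref{le3.0}.

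\textbf{Step 1: degree bound on $T$.} By Lemma \ref{le3.1}, every vertex of $S$ is adjacent to all other vertices. Hence each $x\in T$ has exactly $s$ neighbours in $S$, so
$$d_{G^*-S}(x)=d_{G^*}(x)-s\ge a-s.$$
Summing over $T$ and combining with \eqref{eq2} and $q\le 1$ gives
$$(a-s)t\le \sum_{x\in T}d_{G^*-S}(x)\le at-bs+q-2\le at-bs-1.$$
This rearranges to $s(b-t)\le -1$. Therefore $s\ge 1$ and $t\ge b+1$.

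\textbf{Step 2: bound on $s$.} Since $\sum_{x\in T}d_{G^*-S}(x)\ge 0$, the same inequality gives $bs\le at-1$, so $s<at/b<t$. I would also try to sharpen this to $s\le a$, or at least to an $O(a)$ bound. Failing that, I would carry $s<\frac{a}{b}t$ and $t\le n/8$ (Lemma \ref{le3.3}) through the estimate below.

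\textbf{Step 3: counting non-edges.} A vertex $x\in T$ is non-adjacent to at least $n-1-s-d_{G^*-S}(x)$ vertices outside $S$. A non-edge is counted twice only when both ends lie in $T$, and there are at most $\binom{t}{2}$ such pairs. Hence
$$\overline{m}\ \ge\ \sum_{x\in T}\bigl(n-1-s-d_{G^*-S}(x)\bigr)-\binom{t}{2}\ \ge\ t(n-1-s)-(at-bs-1)-\frac{t(t-1)}{2}.$$
Substituting $s<\frac{a}{b}t$, the right-hand side is roughly $t\bigl(n-1-a-\tfrac{a}{b}t-\tfrac{t-1}{2}\bigr)$. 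This is a concave function of $t$ on the interval $b+1\le t\le n/8$, so its minimum is attained at an endpoint.

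\textbf{Step 4: the two endpoints (main obstacle).} At $t=n/8$ the bound is of order $n^2$, far beyond $(a+2)n$, so that endpoint is easy. At $t=b+1$ the bound is about
$$(b+1)\bigl(n-2a-\tfrac{b}{2}-O(1)\bigr).$$
Since $b\ge a+2$, i.e. $b+1\ge a+3$, this exceeds $(a+2)n-a^2-\frac72a-3$ once $n$ is larger than roughly $(a+3)(2a+b)$. The hypothesis $n\ge 2b^2+5ab+7b+34$ should cover this, but this is where the constants must be checked carefully. The delicate point is that the margin comes only from $b+1$ versus $a+2$, i.e. a single extra copy of $n$. So the lower-order terms involving $s$ and $t^2$ must be controlled using $n\ge 2b^2+5ab+\cdots$, and I expect the bookkeeping in this comparison to be the main difficulty.

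Once Step 4 is verified, we obtain $\overline{m}\ge (a+2)n-a^2-\frac72a-3$, contradicting Lemma \ref{le3.0}. Hence $q\ge 2$.
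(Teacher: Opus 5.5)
Your proof is correct and follows essentially the same route as the paper: from \eqref{eq2} with $q\le 1$ you get $s\ge 1$, $t\ge b+1$ and $s<t$. You then count non-edges at $T$ outside $S$ and minimize a concave function of $t$ over $[b+1,n/8]$ (Lemma \ref{le3.3}), where the decisive endpoint $t=b+1$ yields the $(b+1)n$ versus $(a+2)n$ margin. The constant check you left open does go through: at $t=b+1$ you only need $(b-a-1)n$ to exceed about $(b+1)(2a+2+\frac{b}{2})$, which $n\ge 2b^2+5ab+7b+34$ guarantees (the paper uses the cruder count $t(n-s-t)-\sum_{x\in T}d_{G^*-S}(x)$ together with $s<t$, which serves equally well).
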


\begin{proof}
Suppose that $0 \leq q \leq 1$. Recall that $\delta \geq a$. Combining \eqref{eq2}, we have
$$at - st\leq \sum_{x\in T} d_{G^*-S}(x) \leq at - bs - 1,$$
and hence $s \geq 1$ and $t \geq b+ \frac{1}{s}$, which implies that $t \geq b+1$. Note that $\sum_{x\in T} d_{G^*-S}(x) \geq 0$ and $a<b$. Then $s \leq \frac{a}{b}t - \frac{1}{b} < t$. We consider the non-edges between $T$ and $V(G^*)-S-T$. By $ 1\leq s < t$, $b+1 \leq t \leq \frac{n}{8} $, and $n \geq 2b^2 + 5ab + 7b + 34$, we have
\begin{eqnarray*}
\overline{m} &\geq& |T| |V(G^*)-S-T| - \sum_{x\in T} d_{G^*-S}(x) \\
&\geq& t(n-s-t) - at + bs + 1 > t(n-2t - a) + b+1 \\
&\geq& (b+1)(n - 2b - 2 - a) + b+1 > (a+2)n - a^2 - \frac{7}{2}a - 2,
\end{eqnarray*}
contradicting Lemma \ref{le3.0}.
\end{proof}

\begin{lem}\label{le3.6}
All components of $G^* - S - T$ are $a$-odd components.
\end{lem}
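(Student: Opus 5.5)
The plan is to argue by contradiction using the extremality of $G^*$, as in Lemma \ref{le3.1}: add edges to $G^*$ without destroying the obstruction from Corollary \ref{corollary2}, and then invoke Lemma \ref{le2.1}. Suppose that some component of $G^*-S-T$ is not $a$-odd, say $Q_{q+1}$, so that $q'\geq q+1$. By Lemma \ref{le3.5}, $q\geq 2$, so in particular the $a$-odd component $Q_1$ exists.

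Let $G'$ be the graph obtained from $G^*$ by adding all edges between $V(Q_1)$ and $V(Q_{q+1})$. These two sets lie in different components of $G^*-S-T$, so at least one edge is added and $G^*$ is a proper spanning subgraph of $G'$. Moreover, $G'$ is connected with $\delta(G')\geq \delta(G^*)\geq a$. I then check that the pair $(S,T)$ still witnesses the failure of Corollary \ref{corollary1} in $G'$.

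First, none of the added edges has an endpoint in $S\cup T$. Hence $\sum_{x\in T}d_{G'-S}(x)=\sum_{x\in T}d_{G^*-S}(x)$, and the number of edges between each component and $T$ is unchanged. The components of $G'-S-T$ are those of $G^*-S-T$, except that $Q_1$ and $Q_{q+1}$ are merged into a single component $Q^*$. For $Q^*$ we have
\[
a|V(Q^*)|+|[V(Q^*),T]|_{G'}=\bigl(a n_1+|[V(Q_1),T]|_{G^*}\bigr)+\bigl(a n_{q+1}+|[V(Q_{q+1}),T]|_{G^*}\bigr)\equiv 1+0\equiv 1 \pmod 2 .
\]
Thus $Q^*$ is $a$-odd, and every other component keeps its parity, so $q_{G'}(S,T)=q$. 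By \eqref{eq2},
\[
\sum_{x\in T}d_{G'-S}(x)\leq at-bs+q_{G'}(S,T)-2 ,
\]
and Corollary \ref{corollary2} shows that $G'$ has no parity $[a,b]$-factor. Therefore $G'\in\mathcal{G}_n^{a,b}$.

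Since $G^*$ is a proper spanning subgraph of the connected graph $G'$, Lemma \ref{le2.1} gives $\rho(G')>\rho$. This contradicts the choice of $G^*$ as a graph of maximum spectral radius in $\mathcal{G}_n^{a,b}$, so every component of $G^*-S-T$ is $a$-odd.

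The only delicate point is making sure the merge keeps $q$ unchanged. Merging two non-$a$-odd components would also work parity-wise, but merging an even component into an odd one is the clean choice, and it needs an odd component to exist; that is exactly what Lemma \ref{le3.5} supplies. The other requirement, that the edge addition does not change the degrees of vertices in $T$ within $G^*-S$, holds because all new edges lie inside $V(G^*)-S-T$.
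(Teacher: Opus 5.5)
Your proposal is correct and follows essentially the same argument as the paper: merge a non-$a$-odd component into $Q_1$ (which exists by Lemma \ref{le3.5}), check that the merged component is still $a$-odd so that $q_{G'}(S,T)=q$ and $\sum_{x\in T}d_{G'-S}(x)$ is unchanged, and then contradict the maximality of $\rho$ via Lemma \ref{le2.1}. The only difference is wording. The paper makes $G'[V(Q_1\cup Q)]$ complete, and given Lemma \ref{le3.1} this is the same graph as the one you get by adding all edges between $V(Q_1)$ and $V(Q_{q+1})$.
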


\begin{proof}
Assume to the contrary that there exists a component $Q$, which is not an $a$-odd component of $G^* - S - T$. Then we have $a|V(Q)|+ |[V(Q),T]|_{G{^*}} \equiv 0$ (mod $2$). By Lemma \ref{le3.5}, $Q_1$ must exist. Construct a new graph $G'$ from $G^*$ by adding edges between $Q_1$ and $Q$ such that $G'[V(Q_1 \cup Q)] = K_{n_{1}'}$, where $n_{1}' = n_1 + |V(Q)|$. Since $an_{1}'+ |[V(Q_1\cup Q),T]|_{G'} \equiv 1$ $(\rm{mod}~2)$, we can deduce that $K_{n_{1}'}$ is still an $a$-odd component. Clearly, $\delta(G') \geq a$, $q_{G'}(S,T) = q$, and $ \sum_{x\in T} d_{G'-S}(x) = \sum_{x\in T} d_{G^*-S}(x).$ It follows that
$$\sum_{x\in T} d_{G'-S}(x) = \sum_{x\in T} d_{G^*-S}(x) \leq at - bs + q - 2 = at - bs + q_{G'}(S,T) - 2.
$$
By Corollary \ref{corollary2}, $G'$ has no parity $[a,b]$-factor. Hence $G' \in \mathcal{G}_n^{a,b}$. Note that $G^*$ is a proper subgraph of $G'$. By Lemma \ref{le2.1}, we have $\rho < \rho(G')$, contrary to the maximality of $\rho$.
\end{proof}

Furthermore, we determine the exact number of $a$-odd components.
\begin{lem}\label{le3.7}
$q = 2$.
\end{lem}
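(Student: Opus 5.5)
Since Lemma \ref{le3.5} already gives $q\geq 2$, we only need to exclude $q\geq 3$. Assume $q\geq 3$. By Lemmas \ref{le3.1} and \ref{le3.6}, $G^*$ has a rigid form. Every vertex of $S$ is adjacent to all other vertices. $G^*-S-T$ is a disjoint union of cliques $Q_1,\dots,Q_q$, and all of them are $a$-odd. The plan is to merge two of these components into one, or to add edges, in a way that keeps the pair $(S,T)$ a certificate that no parity $[a,b]$-factor exists. Such a merge gives a graph in $\mathcal{G}_n^{a,b}$ with strictly larger spectral radius, which contradicts the maximality of $\rho$ by Lemma \ref{le2.1}.

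\textbf{Step 1: the merge.} Take three $a$-odd components $Q_1,Q_2,Q_3$ and add all edges between them, so that they become one clique on $n_1+n_2+n_3$ vertices. The parity of $a|V(Q)|+|[V(Q),T]|$ is additive over components. Three odd summands give an odd sum, so the merged component is again $a$-odd. The sum $\sum_{x\in T}d_{G'-S}(x)$ does not change, while $q_{G'}(S,T)=q-2$. Inequality \eqref{eq2} therefore becomes
\[
\sum_{x\in T} d_{G'-S}(x)\le at-bs+q_{G'}(S,T)-2+2 .
\]
This has slack $+2$, so it is not a certificate for $G'$ as it stands.

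\textbf{Step 2: repairing the lost slack.} The deficit of $2$ has to be recovered from the structure of the graph. I would use the parity identity \eqref{eq1} together with a counting argument. If $q\geq 3$, then \eqref{eq2} and Lemma \ref{le3.0} force the vertices of $T$ to have small degree into $G^*-S$, and $T$ cannot be large by Lemma \ref{le3.3}.

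A cleaner alternative is to bound $\rho$ directly. First show that the $a$-odd components $Q_2,\dots,Q_q$ are small. Each of these components has vertices of degree at least $a+1$ in $G^*-S$ by Lemma \ref{le3.4}, so it sends many non-edges to the other parts. Each such component contributes roughly $n$ non-edges to $\overline{G^*}$. If $q\geq 3$, with two or more non-giant components, and $s$ is small, one checks that $\overline{m}$ exceeds the bound $(a+2)n-a^2-\frac72a-3$ of Lemma \ref{le3.0}.

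To make this precise I would split into cases according to $s$.
\begin{itemize}
\item \emph{Case $s\ge1$.} From \eqref{eq2}, $at+q-2\ge bs$. Combined with $q\geq 3$, this yields enough vertices in $T\cup Q_2\cup\dots\cup Q_q$ that each miss about $n$ vertices, which contradicts Lemma \ref{le3.0} via the same style of estimate as in Lemma \ref{le3.5}.
\item \emph{Case $s=0$.} Here I would compare $G^*$ with $G_n^a$. The graph is then a subgraph of a graph of the form $K_{t}\vee(K_{n_1}\cup\cdots\cup K_{n_q})$ with restricted edges from $T$. Lemma \ref{le2.6} and an equitable-quotient computation (Lemmas \ref{le2.5}, \ref{le2.7}) then show that $\rho<n-a-3$ when $q\ge3$, which contradicts $\rho>n-a-3$.
\end{itemize}

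\textbf{Main obstacle.} The hardest part is the quantitative estimate that turns $q\geq 3$ into either too many non-edges or a spectral radius below $n-a-3$. It must hold uniformly in $s$ and $t$, with the constant $2a^2+136a+264$ absorbing the error terms. I would first attempt the edge-counting contradiction with Lemma \ref{le3.0}, mirroring the proofs of Lemmas \ref{le3.3} and \ref{le3.5}. I would fall back on the quotient-matrix bound only for the residual small cases.
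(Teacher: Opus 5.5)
Your proposal has a genuine gap: it is an outline, not a proof. Step~1 fails, as you note. Step~2 is only an intention. The two final cases assert the needed contradictions without the estimates that would produce them.

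The one concrete route you name, the spectral argument for $s=0$, does not work. Lemmas~\ref{le2.6} and~\ref{le2.7} concern graphs $K_s\vee(\cdots)$ whose joined vertices are universal. In the paper they are used only when $t=0$: there $S$ is universal by Lemma~\ref{le3.1}, and $q\ge bs+2\ge b+2$ supplies the $b+1$ singletons Lemma~\ref{le2.7} needs. When $s=0$, the vertices of $T$ are far from universal, since their degrees sum to at most $at+q-2$, so the supergraph $K_t\vee(K_{n_1}\cup\cdots\cup K_{n_q})$ is far too large. Already for $t=1$, $q=3$, Lemma~\ref{le2.6} only gives $\rho\le\rho(K_1\vee(K_{n-3}\cup 2K_1))$, which is at least $\rho(K_{n-2})=n-3>n-a-3$. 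Lemma~\ref{le2.7} is unavailable for $3\le q\le b+1$. All the real work would sit in your phrase ``with restricted edges from $T$'', and you offer no computation there.

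The counting route has its own obstacle. By \eqref{eq2}, $\sum_{x\in T}d_{G^*-S}(x)$ is bounded only by $at-bs+q-2$, which grows with $q$, so you cannot say each vertex of $T$ misses about $n$ vertices. Meanwhile the non-giant components alone may give only about $2n$ non-edges, well below the $(a+2)n$ of Lemma~\ref{le3.0}. An example is $q=3$ with $Q_2,Q_3$ singletons, which Lemma~\ref{le3.4} allows once $t\ge a+1$. So your claim that two or more non-giant components with small $s$ already contradict Lemma~\ref{le3.0} does not follow without using $T$.

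The paper removes the $q$-dependence as follows. Each surplus odd component $Q_i$ ($i\ge4$, or $i\ge b+2$ when $s\ge1$) has an edge to $T$: by connectivity if $s=0$, and by Lemma~\ref{le3.4} if $n_i\le a+1$. Deleting one such edge flips the parity of $Q_i$, so merging $Q_i$ into $Q_1$ keeps that clique $a$-odd. Then $(S,T)$ is still a certificate, the number of non-edges does not increase, and the $T$-degree bound drops to $at+1$ (resp.\ $at-bs+b-1$). A cruder alternative is to charge one non-edge to each of the $\binom{q}{2}$ pairs of components, which absorbs the $+q$ when $q\ge3$.

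This merging is exactly the repair of the lost slack you wanted in Step~1. However, it serves only to count non-edges, not for a spectral comparison, because the deletions can break $\delta\ge a$ and destroy the subgraph relation Lemma~\ref{le2.1} needs.

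Even after this reduction, the paper still has to:
\begin{itemize}
\item prove $t\le a+2$;
\item split on whether $|T\cup V(Q_2')\cup V(Q_3')|$ exceeds roughly $n/2$, playing a large $Q_1'$ against the other components in that case;
\item use concavity of $|H|(n-|H|)$ on $a+3\le|H|\le n/2$;
\item treat separately $t=0$ (via Lemmas~\ref{le2.6} and~\ref{le2.7}), $t=1$ with $s\ge1$ (where $q\ge(b-1)s+2$), and $s\ge1$ with at least $b+2$ components of order $\ge a+2$.
\end{itemize}
None of these estimates appears in your proposal.
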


\begin{proof}
By Lemma \ref{le3.5}, we know that $q\geq 2$. Suppose to the contrary that $q\geq3$.
Next we divide the proof into the following three cases.
\begin{case}\label{subcase2.1}
$t = 0$.
\end{case}

It follows from \eqref{eq2} that $\sum_{x \in T} d_{G^*-S}(x)\leq -bs+q-2.$ By Lemma \ref{le3.2}, $s \geq 1$. Combining $-bs+q-2 \geq 0 $ and $s \geq 1$, we have $q \geq bs + 2 \geq b + 2$. By $bs+2 \leq q \leq n - s$, we can deduce that $n\geq (b+1)s+2>(b+1)s+1$.
Note that $n \geq 2b^2 + 5ab + 7b + 34>2b$. Then we have $n\geq \max\{2b,(b+1)s+1\}$. According to Lemmas \ref{le3.1} and \ref{le3.6}, we can deduce that $G^*\cong K_s\vee(K_{n_1}\cup K_{n_2}\cup \cdots\cup K_{n_q})$. By $q\geq b+2$, $b\geq a+2$, Lemmas \ref{le2.6} and \ref{le2.7}, we have
\begin{eqnarray*}
\rho&\leq&\rho(K_s\vee(K_{n-s-q+1}\cup (q-1)K_1))\leq\rho(K_s\vee(K_{n-s-b-1}\cup(b+1)K_1))\\
&<&n-b-1\leq n-a-3,
\end{eqnarray*}
which contradicts $\rho>n-a-3$.

\begin{case}\label{subcase2.2}
$t=1$.
\end{case}

Suppose that $T = \{u_0\}$. In this case, we have
\begin{eqnarray}\label{eq3}
\sum_{x\in T} d_{G^*-S}(x)=d_{G^*-S}(u_0) \leq a-bs+q-2.
\end{eqnarray}
Next we divide the discussion according to the value of $s$.

\begin{subcase}
$s=0.$
\end{subcase}

Since $G^*$ is connected, $|[T,V(Q_i)]|_{G^*}\geq 1$ for any $1\leq i\leq q$. For $q=3$, define $G' = G^*$ and $V(Q_{i}')=V(Q_i)$ for $1\leq i\leq 3$. For $q\geq 4$, we construct a new graph $G'$ from $G^*$ by deleting one edge between $T$ and $V(Q_i)$ for each $4\leq i\leq q$ and adding edges between $V(Q_i)$ and $V(Q_1)$ such that $G'[V(Q_1) \cup V(Q_4)\cup \cdots \cup V(Q_q)] = K_{n_{1}'}$, where $n_{1}' = n_1 + n_4 + \dots + n_q$. Clearly, $|E(\overline{G'})|\leq \overline{m}$. Note that $an_1'+|[T,V(Q_1')]|_{G'} \equiv 1$ (mod $2$). Then $G'-S-T$ has three $a$-odd components $Q_1', Q_2', Q_3'$ satisfying $V(Q_1') = V(Q_1) \cup V(Q_4) \cdots \cup V(Q_q)$, $V(Q_2') = V(Q_2)$, and $V(Q_3') = V(Q_3)$. Combining \eqref{eq3}, we have
\begin{eqnarray}\label{eq4}
\sum_{x\in T} d_{G'-S}(x) = \sum_{x\in T} d_{G^*-S}(x)-(q-3)\leq a + q - 2 - (q-3)= a+ 1.
\end{eqnarray}
Let $H = T \cup V(Q_2') \cup V(Q_3')$. By Lemma \ref{le3.4}, $|H| \geq a+3$. Next we assert that $|H| \leq \frac{1}{2}n - b - 1$. Otherwise, suppose that $|H| \geq \frac{1}{2}n - b$. Since $t=1$, we have $|V(Q_2') \cup V(Q_3')| \geq \frac{1}{2}n - b - 1$. Note that $|V(Q_1')| \geq |V(Q_2')| \geq |V(Q_3')|$. Then we can obtain that $|V(Q_1')| \geq \frac{n-1}{3}$. Next consider the non-edges of $G'$ between $V(Q_1')$ and $V(Q_2') \cup V(Q_3')$. Since $n \geq 2b^2 + 5ab + 7b + 34$, we have
\begin{eqnarray*}
\overline{m} &\geq& |E(\overline{G'})|\geq \frac{n-1}{3}\left(\frac{1}{2}n - b - 1\right) = \frac{1}{6}n^2 - \left(\frac{b}{3} + \frac{1}{2}\right)n + \frac{b+1}{3} \\
&>& (a+2)n - a^2 - \frac{7}{2}a - 2,
\end{eqnarray*}
contrary to Lemma \ref{le3.0}. Hence $a+3 \leq |H| \leq \frac{1}{2}n - b - 1$.
Now we consider the non-edges of $G'$ between $H$ and $V(Q_1')$, together with one non-edge between $V(Q_2')$ and $V(Q_3')$. By \eqref{eq4} and $n \geq 2a^2 + 136a + 164$, one can obtain that
\begin{eqnarray*}
\overline{m} &\geq& |E(\overline{G'})|\geq |H| |V(Q_1')| + 1 - \sum_{x\in T} d_{G'-S}(x)\geq |H|  (n - |H|) - a\\
&\geq& (a+3)(n - a - 3)-a > (a+2)n - a^2 - \frac{7}{2}a - 2,
\end{eqnarray*}
which contradicts Lemma \ref{le3.0}.

\begin{subcase}
$s \geq 1.$
\end{subcase}

According to $\delta \geq a$ and \eqref{eq3}, $a - s \leq d_{G^*-S}(u_0) \leq a - bs + q - 2$. Hence $q \geq (b-1)s + 2 \geq b+1$. Combining $a - bs + q - 2\geq 0$ and $q \leq n - s - 1$, we can obtain that $s\leq \frac{n}{b+1}+\frac{a-3}{b+1} < \frac{n}{b+1} + 1$. By $t=1$ and  Lemma \ref{le3.4} , one can deduce that $n_i \geq a+1$ for $1\leq i\leq q$. Now consider the non-edges among the $Q_1, Q_2, \dots, Q_q$. By $q \geq b+1$, $s < \frac{n}{b+1} + 1$, $b\geq a+2\geq 4$, and $n \geq 2b^2 + 5ab + 7b + 34$, we have
\begin{eqnarray*}
\overline{m} &\geq& \frac{1}{2}\sum_{i=1}^q n_i(n - s - 1 - n_i)\geq \frac{1}{2}(a+1) (q-1)(n - s - 1)\\
&>&  \frac{(a+1)b^2}{2(b+1)} n - ab - b \geq \frac{3b^2}{2(b+1)} n - ab - b \geq   \frac{b(b+2)}{b+1} n - ab - b \\
&\geq& (a+2)n + \frac{b}{b+1} n - ab - b>(a+2)n - a^2 - \frac{7}{2}a - 2,
\end{eqnarray*}
contradicting Lemma \ref{le3.0}.

\begin{case}\label{subcase2.3}
$t \geq 2$.
\end{case}

\begin{subcase}
$s=0$.
\end{subcase}

It follows from \eqref{eq2} that
\begin{eqnarray}\label{eq5}
\sum_{x \in T}d_{G^*-S}(x) \leq at+q-2
\end{eqnarray}
Since $G^*$ is connected, $|[T,V(Q_i)]|_{G^*}\geq 1$ for any $1\leq i\leq q$. For $q=3$, we define $G_{1}=G^*$ and $V(Q_i')=V(Q_i)$. For $q\geq4$, we construct a new graph $G_{1}$ from $G$ by deleting one edge between $T$ and $V(Q_i)$ for any $4\leq i\leq q$ and connecting $V(Q_1)$ and $V(Q_i)$ such that $G_{1}[V(Q_1) \cup V(Q_4)\cup \cdots \cup V(Q_q)] = K_{n_{1}'}$, where $n_{1}' = n_1 + n_4 + \dots + n_q$. Clearly, $|E(\overline{G_1})| \leq \overline{m}$. Observe that $G_{1}-S-T$ has three $a$-odd components $Q_1', Q_2'$, $Q_3'$ satisfying $V(Q_1') = V(Q_1) \cup V(Q_4)\cup \cdots \cup V(Q_q)$, $V(Q_2') = V(Q_2)$, and $V(Q_3') = V(Q_3)$. By \eqref{eq5}, we can obtain that
\begin{eqnarray}\label{eq6}
\sum_{x\in T} d_{G_{1}-S}(x)&=&\sum_{x\in T} d_{G^*-S}(x) -(q-3)\leq at + q - 2-(q-3) = at + 1.
\end{eqnarray}
Now we claim that $t \leq a+2$. Suppose to the contrary that $t \geq a+3$. Focus on the non-edges of $G_{1}$ between $T$ and $V(G_1)-T$. According to \eqref{eq6}, $a+3 \leq t \leq \frac{1}{8}n$, and $n \geq 2a^2 + 136a + 164$, we deduce that
\begin{eqnarray*}
\overline{m} &\geq& |E(\overline{G_1})|\geq |T||V(G_1)-T|-\sum_{x\in T}d_{G_{1}-S}(x)\geq t(n-t) - at - 1 \\
&\geq& (a+3)(n - a - 3) - a(a+3) - 1 >(a+2)n - a^2 - \frac{7}{2}a - 2,
\end{eqnarray*}
contrary to Lemma \ref{le3.0}. Hence $t \leq a+2$. Let $H_{1} = T \cup V(Q_2') \cup V(Q_3')$. By Lemma \ref{le3.4}, we have $|H_{1}| \geq a+3$. Next we assert that $|H_{1}| \leq \frac{1}{2}n - b - 1$. In fact, suppose that $|H_{1}| \geq \frac{1}{2}n - b$. Since $t \leq a+2$, $|V(Q_2')| + |V(Q_3')| \geq \frac{1}{2}n - b- a - 2 > \frac{1}{2}n - 2b - 2$. Observe that $|V(Q_1')| \geq |V(Q_2')| \geq |V(Q_3')|$. By $t \leq a+2$ and $s=0$, we have $|V(Q_1')| \geq \frac{n-s-t}{3}\geq \frac{n - a - 2}{3}$. In the following, we consider the non-edges between $V(Q_1')$ and $V(Q_2') \cup V(Q_3')$. Together with $n \geq 2b^2 + 5ab + 7b + 34$, we have
\begin{eqnarray*}
\overline{m} &\geq& |E(\overline{G_1})| > \frac{1}{3}(n - a - 2)\left(\frac{1}{2}n - 2b - 2\right)  \\
&=& \frac{n^2}{6} - \left(\frac{a}{6} + \frac{2b}{3} + 1\right)n + \frac{2}{3}(a+2)(b+1)  \\
&>& (a+2)n - a^2 - \frac{7}{2}a - 2,
\end{eqnarray*}
which contradicts Lemma \ref{le3.0}. Hence $|H_{1}| \leq \frac{1}{2}n - b - 1$. Consider the number of the non-edges between $H_{1}$ and $V(Q_1')$, together with one non-edge between $V(Q_2')$ and $V(Q_3')$. Combining \eqref{eq6}, $t \leq |H_{1}|$, $a+3 \leq |H_{1}| \leq \frac{1}{2}n - b - 1$, and $n \geq 2a^2 + 136a + 164$, we have
\begin{eqnarray*}
\overline{m} &\geq& |E(\overline{G_1})|\geq|H_{1}||V(Q_1')| + 1 - \sum_{x\in T} d_{G_{1}-S}(x) \\
&\geq& |H_{1}|(n - |H_{1}|) - at\geq|H_{1}|(n-a-|H_{1}|)\\
&\geq&(a+3)(n - 2a - 3)>(a+2)n-a^2-\frac{7}{2}a-2,
\end{eqnarray*}
contradicting Lemma \ref{le3.0}.

\begin{subcase}
$s \geq 1$.
\end{subcase}

For $3 \leq q \leq b+1$, we define $G_{2} = G^*$. By (\ref{eq2}), we have
$$\sum_{x\in T} d_{G_2-S}(x) \leq at - bs + b - 1.$$
For convenience, let $V(Q_j')=V(Q_j)(1\leq j\leq q)$. For $q \geq b+2$, let $r$ be the number of the $a$-odd components with $n_i \geq a+2$ for $1\leq i\leq q$. For $0 \leq r \leq b+1$, we can obtain that $n_i \leq a+1(b+2 \leq i\leq q)$. Combining this and Lemma \ref{le3.4}, we have $|[T,V(Q_i)]|_{G^*}\geq1$ for $b+2 \leq i\leq q$. We construct a new graph $G_{2}$ from $G^*$ by deleting one edge between $T$ and $Q_i$ for any $b+2 \leq i\leq q$ and attaching $V(Q_i)$ to $V(Q_1)$ such that $G_{2}[V(Q_1) \cup V(Q_{b+2})\cup\cdots\cup V(Q_q)] = K_{n_{1}'}$, where $n_{1}' = n_1 + n_{b+2} + \dots + n_q$. Clearly, $|E(\overline{G_2})| \leq \overline{m}$. In this case, observe that $G_{2}-S-T$ has $b+1$ $a$-odd components $Q_i'$ ($1\leq i\leq b+1$) satisfying $V(Q_1')=V(Q_1) \cup V(Q_{b+2})\cup\cdots\cup V(Q_q)$, $V(Q_j') = V(Q_j)$ ($2\leq j\leq b+1$). By \eqref{eq2}, we have
\begin{eqnarray}\label{eq7}
\sum_{x\in T} d_{G_{2}-S}(x) &=& \sum_{x\in T} d_{G^*-S}(x)-(q - b - 1)  \leq at-bs+b-1.
\end{eqnarray}
According to $\sum_{x\in T} d_{G_{2}-S}(x) \geq 0$, we can obtain that $s \leq \frac{a}{b}t+\frac{b-1}{b}<t+1$. Now we claim that $t \leq a+2$. Suppose to the contrary that $t \geq a+3$. Consider the non-edges of $G_{2}$ between $T$ and $V(G_{2})-S-T$. By \eqref{eq7}, $1\leq s< t+1$, $a+3 \leq t \leq \frac{1}{8}n$, and $n \geq 2a^2 + 136a + 164$,
\begin{eqnarray*}
\overline{m} &\geq& |E(\overline{G_2})| \geq |T||V(G_{2})-S-T|-\sum_{x \in T}d_{G_{2}-S}(x)\\
&\geq& t(n - s - t) - at + bs - b + 1 > t(n - 2t - 1) - at + 1  \\
&\geq& (a+3)n - 3a^2 - 16a - 20 > (a+2)n - a^2 - \frac{7}{2}a - 2,
\end{eqnarray*}
which contradicts Lemma \ref{le3.0}. Hence $t \leq a+2$. Then $s < t+1 \leq a+3$. Let $H_{2}= V(G_{2})-V(Q_1')-S$. According to Lemma \ref{le3.2}, we have $|H_{2}| \geq a+3$. If $|H_{2}| \geq \frac{1}{2}n - a - 1$, then by $t \leq a+2$, we can deduce that $|V(G_{2})-S-T-V(Q_1')| \geq \frac{1}{2}n - 2a - 3$.
Next our goal is to prove that $|V(Q_1')|>\frac{n - 2a - 5}{b+1}$ in two cases. When $3\leq q\leq b+1$, we have $|V(Q_1')|\geq |V(Q_2')|\geq\cdots \geq |V(Q_{q}')|$, which implies that $|V(Q_1')| \geq \frac{n- s - t}{q} \geq\frac{n - s - t}{b+1}>\frac{n - 2a - 5}{b+1}$. When $q \geq b+2$ and $0\leq r\leq b+1$, we have $|V(Q_1')| \geq |V(Q_2')| \geq \cdots \geq |V(Q_{b+1}')|$, implying $|V(Q_1')| \geq \frac{n - s - t}{b+1} >\frac{n - 2a - 5}{b+1}$. Consider the non-edges of $G_{2}$ between $V(Q_1')$ and $V(G_{2})-S-T-V(Q_1')$. By $n\geq2b^2+5ab+7b+34$, we have
\begin{eqnarray*}
\overline{m} &\geq& |E(\overline{G_2})|> \frac{1}{b+1}(n - 2a - 5)\left(\frac{1}{2}n - 2a - 3\right)   \\
&=& \frac{1}{2(b+1)} n^2 - \left(\frac{2a}{b+1} + \frac{11}{2(b+1)}\right)n + \frac{1}{b+1}(4a^2 + 16a + 15) \\
&>& (a+2)n - a^2 - \frac{7}{2}a - 2,
\end{eqnarray*}
contrary to Lemma \ref{le3.0}. Hence $|H_{2}|\leq\frac{1}{2}n - a - 1$. Calculate the non-edges of $G_{2}$ between $H_{2}$ and $V(Q_1')$. Combining \eqref{eq7}, $1\leq s<a+3$, $t\leq a+2$, $a+3\leq|H_{2}|\leq\frac{1}{2}n - a - 1$, and $n \geq 2a^2 + 136a + 164$, we obtain that
\begin{eqnarray*}
\overline{m} &\geq& |E(\overline{G_2})| \geq |H_{2}|  |V(Q_1')| - \sum_{x\in T} d_{G_{2}-S}(x) \\
&\geq& |H_{2}|  (n - s - |H_{2}|) - at + bs - b + 1 >|H_{2}|  (n - a - 3 - |H_{2}|) - a(a+2) + 1  \\
&\geq& (a+3)(n - 2a - 6) - a^2 - 2a + 1  > (a+2)n - a^2 - \frac{7}{2}a - 2,
\end{eqnarray*}
which contradicts Lemma \ref{le3.0}.

For $r\geq b+2$, by $at-bs+q-2 \geq 0$, $bs-at+2\leq q \leq n-s-t$, and Lemma \ref{le3.3}, we deduce that
$$
s \leq \frac{a-1}{b+1} t + \frac{n-2}{b+1} < t + \frac{1}{4}n \leq \frac{3}{8}n.
$$
Now it suffices to consider the non-edges of $G^*$ among the $a$-odd components $Q_{1},Q_{2},\dots,\allowbreak Q_{b+2}$. Recall that $n_i \geq a+2$ for any $1\leq i\leq b+2$. According to $s< \frac{3}{8}n $, Lemma \ref{le3.3}, and $n \geq 2a^2 + 136a + 164$, we have
\begin{eqnarray*}
\overline{m} &\geq& \frac{1}{2}\sum_{i=1}^{b+2} n_i  (n - s - t - n_i) \geq \frac{a+2}{2}  \left[(b+2)(n - s - t) - \sum_{i=1}^{b+2} n_i\right] \\
&\geq&2(b+1)(n - s - t)> (b+1)n  \geq (a+3)n \\
&>& (a+2)n - a^2 - \frac{7}{2}a - 2,
\end{eqnarray*}
which contradicts Lemma \ref{le3.0}.
\end{proof}

According to \eqref{eq2} and Lemma \ref{le3.7}, we have
\begin{eqnarray}\label{eq8}
\sum_{x\in T} d_{G^*-S}(x) \leq at - bs.
\end{eqnarray}
Let $U = T \cup V(Q_2)$. By Lemma \ref{le3.4}, we can obtain that $|U| \geq a+2$. Next we claim that $|U| \leq \frac{1}{2}n - 2b - 1$. Assume that $|U| \geq \frac{1}{2}n - 2b$. Note that $\sum_{x\in T} d_{G^*-S}(x) \geq 0$. By (\ref{eq8}), we can obtain that $s \leq \frac{a}{b}t < t$. Combining $n_1 \geq n_2$ and Lemma \ref{le3.3}, we have $n_1 \geq \frac{n - s - t}{2} > \frac{3}{8}n$. Furthermore, $n_2=|U|-t\geq\frac{3}{8}n-2b$.
Now we consider the non-edges between $V(Q_1)$ and $V(Q_2)$. By $n\geq 2b^2+5ab+7b+34$, we have
$$
\overline{m} > \frac{3}{8}n \left(\frac{3}{8}n - 2b\right) = \frac{9}{64}n^2 - \frac{3}{4}bn > (a+2)n - a^2 - \frac{7}{2}a - 2,
$$
contradicting Lemma \ref{le3.0}. Hence $|U| \leq \frac{1}{2}n - 2b - 1$. Moreover, we prove that $S=\emptyset$.

\setcounter{case}{0}

\begin{lem}\label{le3.8}
$s=0$.
\end{lem}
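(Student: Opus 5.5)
Assume to the contrary that $s\geq 1$; I aim for a contradiction with the maximality of $\rho$. The plan is a spectral edge-switching argument rather than more edge counting. The edge counts only give $\overline{m}<(a+2)n$, which is consistent with a few vertices of $S$.

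\textbf{Structural facts.} By Lemmas \ref{le3.1}, \ref{le3.6} and \ref{le3.7}, $G^*-S-T$ consists of exactly two cliques $Q_1,Q_2$, each $a$-odd, and every vertex of $S$ is adjacent to all other vertices. From \eqref{eq8} and $\sum_{x\in T}d_{G^*-S}(x)\ge 0$ we get $s\le \frac{a}{b}t<t$, so $t\ge 2$. Since $|U|\le \frac12 n-2b-1$, we also have $n_1\ge n/2$, so $Q_1$ is a huge clique. In $G^*-S$, each vertex of $T$ has degree at least $a-s$. Summing with \eqref{eq8} gives $\sum_{x\in T}d_{G^*-S}(x)\le at-bs$, so the total deficiency is large.

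\textbf{Construction.} Pick $w\in S$ and form $G'$ by deleting the edges from $w$ to $T\cup V(Q_2)$, keeping $w$ adjacent to $S\setminus\{w\}$ and to $Q_1$, so that $w$ joins $Q_1$. I would then choose $S'=S\setminus\{w\}$ and $T'=T$. The new $Q_1'$ is $Q_1\cup\{w\}$; its parity changes by $a$ plus the number of edges from $w$ to $T$, namely $0$. I would fix this either by also deleting one chosen $w$–$T$ edge, or by comparing with $T'=T\cup\{w\}$. Then
\[
\eta(S',T')=\eta(S,T)-b+(\text{change in }\textstyle\sum_{T} d)\le -2-b+s\cdot(\cdot),
\]
so $\eta<0$ survives and $G'$ has no parity factor. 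We must keep $\delta(G')\ge a$. Vertices of $T$ lose one neighbour, so this needs care. The cleaner alternative is to keep the $w$–$T$ edges and instead add all edges from $T$ to $Q_1$ as compensation; the inequality then still holds because $b>a$ and $b|S|$ drops by $b$ while $\sum d$ rises by at most $t$ times something. I would settle exactly which edges to add so that $\eta(S',T)\le -2$ holds while $\delta\ge a$ is kept.

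\textbf{Spectral comparison.} Once some $G'\in\mathcal G_n^{a,b}$ has been obtained from $G^*$ by moving neighbours, I compare $\rho(G')$ with $\rho$ by Lemma \ref{le2.4} with Perron vector entries. Vertices of $Q_1$ have degree at least $n_1-1+s$, which is close to $n$, so their Perron entries dominate those of $T\cup V(Q_2)$. Thus rerouting edges from $T\cup V(Q_2)$ towards $Q_1$ strictly increases $\rho$, contradicting maximality.

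If the switching route fails, the fallback is a direct computation. Here $G^*$ is a subgraph of $K_s\vee(K_{n_1}\cup H)$ with $|H|=|U|$ small. Using $\rho>n-a-3$ and the bound $\rho\le \rho(K_s\vee(K_{n-s-|U|}\cup \cdots))$, I would estimate through $\rho x_u=\sum x$ for $u\in T$. This should show that $T$ vertices with few neighbours force $\rho<n-a-3$ when $s\ge1$, since $bs$ extra missing degree costs too much. I expect the main obstacle to be keeping both $\delta\ge a$ and the $a$-odd parity bookkeeping for $q$ during the transformation.
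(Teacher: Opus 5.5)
\textbf{There is a genuine gap: the proposal is not a proof, and the switching it sketches cannot work as described.}

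\emph{Deleting edges at $w$ gives no contradiction.} By Lemma \ref{le3.1} every $w\in S$ is a dominating vertex. Hence $(\rho+1)x_w=\sum_{u}x_u>(\rho+1)x_v$ for every $v\in V(Q_1)$, because such $v$ misses $V(Q_2)$. So Lemma \ref{le2.4} can never certify an increase of $\rho$ when edges at $w$ are transferred to a vertex of $Q_1$. Simply deleting the edges from $w$ to $T\cup V(Q_2)$ gives a proper spanning subgraph of $G^*$, whose spectral radius is smaller, so this does not contradict maximality. That deletion can also break $\delta\ge a$ at vertices of $T$ of degree $a$. For odd $a$ it also destroys the $a$-oddness of $Q_1\cup\{w\}$, as you noticed.

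\emph{Keeping the $w$--$T$ edges breaks the counting.} Every $x\in T$ then gains the neighbour $w$ in $G'-S'$, so $\eta(S',T)=\eta(S,T)-b+t+(q-q_{G'}(S',T))$. Adding edges from $T$ to $Q_1$ only increases $\sum_{x\in T}d_{G'-S'}(x)$, so it pushes $\eta$ up rather than compensating. Since in fact $t\ge b$ (shown below), nothing forces $\eta(S',T)\le -2$. Your fallback via $\rho<n-a-3$ is only a heuristic and is not carried out.

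\emph{The missing idea.} You almost wrote it down, and it also refutes your opening claim that edge counting is too weak. From $d_{G^*-S}(x)\ge a-s$ for $x\in T$ and \eqref{eq8}, you get $(a-s)t\le at-bs$, i.e.\ $t\ge b\ge a+2$ once $s\ge 1$. So $|U|=t+n_2\ge a+3$, not merely $t\ge 2$.

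Now count non-edges between $U$ and $V(Q_1)$. There are no $Q_2$--$Q_1$ edges, there are at most $at-bs$ edges from $T$ to $Q_1$, and $n_1=n-s-|U|$. Using $1\le s<t<|U|$,
\[
\overline{m}\ \ge\ |U|(n-s-|U|)-at+bs\ >\ |U|(n-2|U|-a)+b .
\]
The right-hand side is concave in $|U|$ on $[a+3,\frac12 n-2b-1]$. The endpoint $|U|=\frac12n-2b-1$ gives the larger value for $n$ in the stated range, so the minimum is $(a+3)(n-3a-6)+b$. This exceeds the bound of Lemma \ref{le3.0}. That contradiction is the paper's entire proof; no Perron-vector switching is needed.
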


\begin{proof}
Assume that $s\geq1.$
Note that $\sum_{x \in T} d_{G^*-S}(x) \geq at - st$. By (\ref{eq8}), we have $t\geq b\geq a+2$. Hence $|U|\geq a+3$. Recall that $s<t$. We consider the number of non-edges between $U$ and $V(Q_1)$. By \eqref{eq8}, $1\leq s<t<|U|$, $a+3 \leq |U| \leq \frac{1}{2}n-2b-1$, and $n \geq 2a^2+136a+264$, we can obtain that
\begin{eqnarray*}
\overline{m}&\geq& |U|(n-s-|U|)-\sum_{x \in T}d_{G^*-S}(x) \geq|U|(n-s-|U|)-at+bs\\
&>&|U|(n-|U|-t)-at+b>|U|(n-2|U|-a)+b\\
&\geq&(a+3)(n-3a-6)+b>(a+2)n-a^2-\frac{7}{2}a-2,
\end{eqnarray*}
contrary to Lemma \ref{le3.0}.
\end{proof}

By Lemma \ref{le3.8}, $s=0.$ Combining Lemma \ref{le3.2}, we have $t\geq 1$. Recall that $\delta \geq a$. It follows from \eqref{eq8} that
\begin{eqnarray*}
at\leq  \sum_{x\in T} d_{G^*}(x)=\sum_{x\in T} d_{G^*-S}(x)\leq at,
\end{eqnarray*}
which implies that $d_{G^*}(x) = a$ for any $x\in T$.
Now we claim that $|U|=a+2$.
If $|U|\geq a+3$, we consider the non-edges between $U$ and $V(Q_1)$. By $|U|\geq t\geq 1$, $a+3 \leq |U| \leq \frac{1}{2}n - 2b - 1$, and $n \geq 2a^2 + 136a + 164$, we have
\begin{eqnarray*}
\overline{m} &\geq& |U|(n - |U|) - \sum_{x\in T} d_{G^*}(x) =|U|  (n - |U|) - at \geq|U|  (n - a - |U|) \\
&\geq& (a+3)(n - 2a - 3) > (a+2)n - a^2 - \frac{7}{2}a - 2,
\end{eqnarray*}
which contradicts Lemma \ref{le3.0}. Hence $|U|=a+2$.

\vspace{3mm}
Next we determine the number of vertices in $T$.

\begin{lem}\label{le3.9}
$t=a+1.$
\end{lem}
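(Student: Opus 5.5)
We first pin down the local structure. Throughout we use $s=0$, $q=2$, Lemmas \ref{le3.1} and \ref{le3.6} (so $G^*-T=Q_1\cup Q_2$ with $Q_1\cong K_{n_1}$ and $Q_2\cong K_{n_2}$), $d_{G^*}(x)=a$ for every $x\in T$, and $|U|=t+n_2=a+2$.

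Take any $v\in V(Q_2)$. Lemma \ref{le3.4} with $S=\emptyset$ gives $d_{G^*}(v)\ge a+1$. On the other hand $d_{G^*}(v)\le (n_2-1)+t=a+1$. Hence equality holds, so every vertex of $Q_2$ is adjacent to every vertex of $T$. Consequently each $x\in T$ spends $n_2$ of its $a$ edges on $Q_2$, which gives $n_2\le a$ and $t\ge 2$.

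Next we use the $a$-odd condition. For $Q_2$ it reads $an_2+tn_2=n_2(2a+2-n_2)\equiv 1 \pmod 2$, which forces $n_2$ to be odd. So it suffices to rule out $n_2\ge 3$. Suppose $n_2\ge3$, so $t=a+2-n_2\le a-1$. Each $x\in T$ then has exactly $a-n_2$ further neighbours, lying in $T\cup V(Q_1)$. Since $n_1=n-a-2$ is huge and $\rho>n-a-3$, the eigen-equations show that $x_w$ is essentially the same for all $w\in V(Q_1)$ and much larger than $x_u$ for $u\in U$ (roughly $x_u=O(x_w\cdot a/n)$).

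We then apply Lemma \ref{le2.4} to normalise $G^*$. If an edge $xy$ lies inside $T$, pick a vertex $w\in V(Q_1)\setminus N(x)$ (one exists since $n_1$ is large) with $x_w\ge x_y$, and replace $xy$ by $xw$. This keeps $\delta\ge a$, keeps $Q_1$ and $Q_2$ as the only components of $G-T$, and keeps the degrees in $T$. The parities $|[V(Q_i),T]|$ change by $\pm1$ for $Q_1$ only together with an equal change elsewhere, so this must be checked carefully. If it fails, we move edges in pairs, or move an edge $xw$ to $x'w$ inside $Q_1$'s neighbourhood, which preserves every parity. Either way Lemma \ref{le2.4} gives a graph in $\mathcal G_n^{a,b}$ with $\rho$ no smaller, and strictly larger if anything moved, contradicting maximality unless $G^*$ is already normalised. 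Therefore we may assume the following: $T$ is independent, and all $x\in T$ are joined to the same set $W\subseteq V(Q_1)$ with $|W|=a-n_2$ (again by Lemma \ref{le2.4}, consolidating neighbourhoods onto the vertex with the largest Perron entry).

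Now $G^*$ is the graph $\Gamma_{n_2}$ with the equitable partition $W$, $V(Q_1)\setminus W$, $T$, $V(Q_2)$. We compute its $4\times4$ quotient matrix and compare its characteristic polynomial with that of $G_n^a=\Gamma_1$, via Lemma \ref{le2.5}. Heuristically $\rho(\Gamma_{n_2})\approx n-a-3+\frac{2e(U,Q_1)}{n}$, where $e(U,Q_1)=t(a-n_2)=(a+2-n_2)(a-n_2)$. This quantity is strictly decreasing in $n_2$, equals $(a+1)(a-1)$ at $n_2=1$, and is at most $(a-1)(a-3)$ for $n_2\ge3$. So for $n$ as large as assumed we expect $\rho(\Gamma_{n_2})<\rho(G_n^a)\le\rho$, a contradiction.

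The main obstacle is making this last comparison rigorous. We would evaluate $P(\Gamma_{n_2},x)$ at $x=\rho(G_n^a)$, or at an explicit threshold $n-a-3+\frac{c}{n}$ lying between the two radii. We would then show $P(\Gamma_{n_2},x)>0$ there, and that $x$ exceeds the second eigenvalue of the quotient (via a trace argument as in Lemma \ref{le2.7}), using $n\ge 2a^2+136a+264$. A secondary difficulty is verifying that the edge-moving steps preserve the parity of $|[V(Q_i),T]|$, so that the modified graph stays in $\mathcal G_n^{a,b}$.

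Once $n_2\ge3$ is excluded, $n_2=1$ and $t=|U|-n_2=a+1$.
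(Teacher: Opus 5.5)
\textbf{There is a genuine gap: the decisive step of the lemma is not proved.} After normalising, everything reduces to showing $\rho(\Gamma_{n_2})<\rho(G_n^a)$ for odd $n_2\ge 3$, and you leave this as ``the main obstacle''.

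Your heuristic for it is also at the wrong scale. Summing the eigen-equations over $V(Q_1)$ gives exactly $(\rho-(n-a-3))\sum_{v\in V(Q_1)}x_v=\sum_{u\in U}d_{Q_1}(u)x_u$. Since $x_u\approx d_{Q_1}(u)\,x_v/\rho$, this yields $\rho(\Gamma)-(n-a-3)\approx \frac{1}{(n-a-2)\rho}\sum_{u\in U}d_{Q_1}(u)^2=O(a^3/n^2)$, not $2e(U,Q_1)/n$. So both radii lie within $O(a^3/n^2)$ of $n-a-3$; to leading order their difference is about $(a-1)(6a-10)/n^2$ when $n_2=3$. Consequently any threshold $n-a-3+c/n$ with fixed $c>0$ eventually lies above both radii and separates nothing. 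A comparison of the two quartic characteristic polynomials would need error control at the $1/n^2$ scale, which the proposal does not supply.

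The paper avoids any polynomial computation. It turns $G^*$ into $G^\star\cong G_n^a$ by deleting $E_1$ and adding $E_2$:
\begin{itemize}
\item $E_1$ consists of the edges inside $\{w_2,\dots,w_{n_2}\}$ and between these vertices and $T$;
\item $E_2$ consists of the edges from $T$ to $v_{a-n_2+1},\dots,v_{a-1}$ and from $w_2,\dots,w_{n_2}$ to $v_1,\dots,v_{a-1}$.
\end{itemize}
It then proves $\boldsymbol y^{T}(A(G^\star)-A(G^*))\boldsymbol x>0$ for the two Perron vectors. This uses only the orderings $x_{w_1}<x_{u_1}<x_{v_{n-a-2}}$ and $y_{w_1}<y_{u_1}<y_{v_{n-a-2}}<y_{v_1}$, together with $|E_2|-|E_1|=(n_2-1)\bigl(a-1-\frac{n_2-2}{2}\bigr)>0$. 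This double-eigenvector comparison is the idea your argument is missing.

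\textbf{Your normalisation step also contains a concrete error.} Replacing an edge $xy$ inside $T$ by $xw$ drops $d_{G^*}(y)$ from $a$ to $a-1$. The new graph then violates $\delta\ge a$ and is not in $\mathcal G_n^{a,b}$, so the move does not ``keep the degrees in $T$''.

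The right move is to delete $xy$ and add both $xw$ and $yw$ for a common non-neighbour $w\in V(Q_1)$; one exists because $|N(x)\cup N(y)|\le 2a<n-a-2$. This keeps $d(x)=d(y)=a$ and $\sum_{z\in T}d_{G^*}(z)$ unchanged. It changes $|[V(Q_1),T]|$ by $2$, so $q$ stays equal to $2$ and no parity issue arises.

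This step, and the consolidation of all $T$-neighbourhoods onto $v_1,\dots,v_{a-n_2}$, both rely on the strict inequality $\min_{v\in V(Q_1)}x_v>\max_{u\in T}x_u$. You only assert this; the paper derives it from the eigen-equations at $u_1$ and $v_{n-a-2}$.

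Finally, the case $n_2=a$ (so $t=2$ and $W=\emptyset$) should be excluded explicitly, since it disconnects $G^*$.
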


\begin{proof}
By Lemma \ref{le3.7}, $n_2\geq1 $. Combining $|U|=a+2$, $t\leq a+1$. Assume to the contrary that $t\leq a.$ Then $n_2 \geq 2$. By Lemma \ref{le3.4},  $|[V(Q_2),T]|_{G^*} = tn_2$. Since $Q_2$ is an $a$-odd component of $G^*-S-T$, $an_2+tn_2$ is odd. Then $n_2$ is odd, and hence $n_2 \geq 3$ and $t \leq a-1$. Next we assert that $t\geq 3$ and $n_2 \leq a-1$. If $t=1$, then $|[V(Q_2),T]|_{G^*} = n_2=a+1$, and hence $d_{G^*}(x)> a+1$ for any $x \in T$,  contradicting $d_{G^*}(x)=a$. If $t=2$, then $n_2=a$. Since $d_{G^*}(x)=a$ for any $x \in T$, $|[V(Q_1),T]|_{G^*}=0$, which contradicts that $G^*$ is connected. Hence $t\geq 3$ and $n_2 \leq a-1$.

Let $T=\{u_1,u_2,\dots,u_t\}$, $V(Q_1)=\{v_1,v_2,\dots,v_{n-a-2}\}$, and $V(Q_2)=\{w_1,w_2,\dots,w_{n_2}\}$. Without loss of generality, assume that $x_{u_1}\geq x_{u_2}\geq \cdots \geq x_{u_t}$ and $x_{v_1}\geq x_{v_2}\geq \cdots \geq x_{v_{n-a-2}}$. Note that $|[V(Q_2),T]|_{G^*} = tn_2$. Then $x_{w_1}= x_{w_2} = \cdots = x_{w_{n_2}}$. By $A(G^*)\boldsymbol x=\rho \boldsymbol x$, we have $$\rho x_{w_1}=\sum_{i=1}^{t}x_{u_i}+(n_2-1)x_{w_1}\leq tx_{u_1}+(n_2-1)x_{w_1}.$$ Combining $\rho > n-a-3$, $t\leq a-1$, and $n\geq2a^2+136a+164$, we have
\begin{eqnarray}\label{eq9}
x_{u_1}> \frac{t}{\rho-n_2+1}x_{u_1}\geq x_{w_1}.
\end{eqnarray}
Let $d_{T}(u_1)=r$. By \eqref{eq9} and $d_{G^*}(x) = a$ for any $x\in T$, we have
\begin{eqnarray*}
\rho x_{u_1}&=&\sum_{u \in T, u\sim u_1} x_u +\sum_{v \in V(Q_1),v \sim u_1}x_v+n_2x_{w_1}<(r+n_2)x_{u_1}+\sum_{1\leq i\leq a-r-n_2} x_{v_i},
\end{eqnarray*}
and hence $x_{u_1}<\frac{1}{\rho-r-n_2}\sum_{1\leq i\leq a-r-n_2}x_{v_i}$. Since $x_{u_1}>0$, we have $a-r-n_2\geq 1$, which implies that $r+n_2\leq a-1$. Note that
\begin{eqnarray*}
\rho x_{v_{n-a-2}}\geq \sum_{v \in V(Q_1), v\sim v_{n-a-2}}x_v\geq \sum_{1\leq i\leq a-r-n_2}x_{v_i}+(n-2a+r+n_2-3)x_{v_{n-a-2}}.
\end{eqnarray*}
Then $x_{v_{n-a-2}}\geq \frac{1}{\rho-(n-2a-3)-r-n_2}\sum_{1\leq i\leq a-r-n_2}x_{v_i}$. By $\rho>n-a-3$, $n \geq 2a^2 + 136a + 164$, and $r+n_2\leq a-1$, we have $1<\rho-(n-2a-3)-r-n_2<\rho-r-n_2$, and hence
\begin{eqnarray}\label{eq10}
x_{v_{n-a-2}}\geq \frac{1}{\rho-(n-2a-3)-r-n_2}\sum_{1\leq i\leq a-r-n_2}x_{v_i} >\frac{1}{\rho-r-n_2}\sum_{1\leq i\leq a-r-n_2}x_{v_i}>x_{u_1}.
\end{eqnarray}

Next we claim that $E(G^*[T])= \emptyset$. Otherwise, there exists an edge $uu'\in E(G^*[T])$. Since $|N_{G^*}(u)\cup N_{G^*}(u')|\leq 2a <|V(Q_1)|$, there exists some vertex $v\in V(Q_1)$ such that $vu, vu' \notin E(G^*)$. We construct a new graph $G'$ from $G^*$ by deleting the edge $uu'$ and adding edges $vu$ and $vu'$.
One can check that \eqref{eq2} still holds for $G'$, $\delta(G')\geq a$ and $G'$ is connected. So $G'\in \mathcal{G}_n^{a,b}$. By \eqref{eq10} and Lemma \ref{le2.4}, we have $\rho(G')>\rho,$ which contradicts the maximality of $G^*$. Hence $E(G^*[T])= \emptyset$.

Now we prove that $u$ is adjacent to $v_ 1, v_2, \ldots, v_{a-n_2}$ for any $u\in T$.
Otherwise, there exist some $v_i$ such that $u\sim v_i$, where $i\geq a-n_2+1.$ Note that $d_{Q_1}(u)=a-n_2.$ Then there must be some $v_j$ such that $u\nsim v_j$, where $1\leq j\leq a-n_2.$
Let $G''=G^*-uv_i+uv_j.$ Note that $G''\in \mathcal{G}_n^{a,b} .$ By Lemma \ref{le2.4}, $\rho(G'')>\rho$, contradicting the maximality of $\rho$. Hence $u$ is adjacent to $v_ 1, v_2, \ldots, v_{a-n_2}$ for any $u\in T$. By symmetry, we have $x_{u_1}=x_{u_2}=\cdots=x_{u_t}$, $x_{v_1}=x_{v_2}=\cdots=x_{v_{a-n_2}}$, and $x_{v_{a-n_2+1}}=x_{v_{a-n_2+2}}=\cdots=x_{v_{n-a-2}}$.

Let $E_1=\{w_iw_j| 2\leq i <j\leq n_2\}\cup \{w_iu_j| 2\leq i\leq n_2, 1\leq j\leq t\}$ and $E_2=\{u_iv_j| 1\leq i\leq t, a-n_2+1\leq j \leq a-1\}\cup \{w_iv_j| 2\leq i\leq n_2, 1\leq j\leq a-1\}$. Define $G^{\star} =G^*-E_1+E_2$, and let $\boldsymbol y$ be the Perron vector of $A(G^{\star})$. Clearly, $G^{\star}\cong G_{n}^{a}\in \mathcal{G}_n^{a,b}$. By symmetry, we have $y_{u_1}=y_{u_i}=y_{w_j}$ for any $2\leq i\leq t, 2\leq j \leq n_2$, $y_{v_1}=y_{v_k}$ for any $2\leq k\leq a-1$, and $y_{v_a}=y_{v_l}$ for any $a+1\leq l \leq n-a-2$. Let $\rho^{\star}=\rho(G^{\star})$. Clearly, $\rho^{\star} > n-a-3$. By $A(G^{\star})\boldsymbol y=\rho^{\star} \boldsymbol y$, we have
\begin{eqnarray}
\rho^{\star}y_{w_1}&=&(a+1)y_{u_1},\label{eq11}\\
\rho^{\star}y_{v_{n-a-2}}&=&(a-1)y_{v_1}+(n-2a-2)y_{v_{n-a-2}},\label{eq12}\\
\rho^{\star}y_{u_1}&=&(a-1)y_{v_1}+y_{w_1}.\label{eq13}
\end{eqnarray}
 According to \eqref{eq11}, $\rho^{\star} > n-a-3$, and $n \geq 2a^2 + 136a + 164$, we have  $y_{w_1}=\frac{a+1}{\rho^{\star}}y_{u_1}<\frac{a+1}{n-a-3}y_{u_1}<y_{u_1}$.
By \eqref{eq12}, $y_{v_{n-a-2}}=\frac{a-1}{\rho^{\star}-n+2a+2}y_{v_1}<y_{v_1}$. Combining \eqref{eq13} and $y_{w_1}<y_{u_1}$, we have $y_{v_{n-a-2}}=\frac{a-1}{\rho^{\star}-n+2a+2}y_{v_1}>\frac{a-1}{\rho^{\star}-1}y_{v_1}>y_{u_1}$.
Hence $y_{w_1}<y_{u_1}<y_{v_{n-a-2}}$ and $y_{v_1}=y_{v_2}=\cdots=y_{v_{a-1}}> y_{v_a}=\cdots = y_{v_{n-a-2}}$. Recall that $3\leq t\leq a-1$ and $3\leq n_2 \leq a-1$. Then
\begin{eqnarray*}
&&\boldsymbol{y}^{T}(\rho^{\star}-\rho)\boldsymbol{x}\\
&=& \boldsymbol{y}^{T}(A(G^{\star})-A(G^*))\boldsymbol{x}\\
&=&\sum_{u_iv_j\in E_2}(x_{u_i}y_{v_j}+x_{v_j}y_{u_i})-\sum_{u_iv_j\in E_1}(x_{u_i}y_{v_j}+x_{v_j}y_{u_i})\\
&\geq& [t(n_2-1)+(a-1)(n_2-1)](x_{w_1}y_{v_1}+x_{v_{a-1}}y_{u_1})\\
&& -\left[\frac{(n_2-1)(n_2-2)}{2}+t(n_2-1)\right](x_{w_1}y_{u_1}+x_{u_1}y_{u_1})\\
&>& t(n_2-1)(x_{w_1}y_{v_1}+x_{v_{a-1}}y_{u_1}-x_{w_1}y_{u_1}-x_{u_1}y_{u_1}\\
&& +(n_2-1)\left[(a-1)-\frac{n_2-2}{2}\right](x_{w_1}y_{v_1}+x_{v_{a-1}}y_{u_1})\\
&\geq&
t(n_2-1)[x_{w_1}(y_{v_1}-y_{u_1})+y_{u_1}(x_{v_{a-1}}-x_{u_1})]+(a+1)(x_{w_1}y_{v_1}+x_{v_{a-1}}y_{u_1})\\
&>&0.
\end{eqnarray*}
It follows that $\rho(G^{\star})>\rho$, which contradicts the maximality of $\rho$. Hence $t=a+1.$
\end{proof}

Now we are in a position to present the proof of Theorem \ref{thm1.3}.

\medskip
\noindent  \textbf{Proof of Theorem \ref{thm1.3}.}
By Lemma \ref{le3.9}, $t=a+1$ and $n_2=1.$
Combining Lemma \ref{le3.4}, we have $|[V(Q_2),T]|_{G^*} = a+1$. Let $T = \{u_1, u_2, \dots, u_{a+1}\}$, $V(Q_1) = \{v_1, v_2, \dots, v_{n-a-2}\}$, and $V(Q_2) = \{w_1\}$. Without loss of generality, assume that $x_{u_1} \geq x_{u_2} \geq \cdots \geq x_{u_{a+1}}$ and $x_{v_1} \geq x_{v_2} \geq \cdots \geq x_{v_{n-a-2}}$. By $A(G^*)\boldsymbol{x}=\rho\boldsymbol{x}$, we have
$$\rho x_{w_1}= \sum_{i=1}^{a+1}x_{u_i} \leq (a+1)x_{u_1}.$$
Combining $\rho>n-a-3$ and $n \geq 2a^2 + 136a + 164$, we can deduce that
\begin{eqnarray*}
x_{w_1}\leq \frac{a+1}{\rho}x_{u_1} < \frac{a+1}{n-a-3}x_{u_1}< x_{u_1}.
\end{eqnarray*}
Let $d_T(u_1)=r$. Combining $x_{w_1}<x_{u_1}$ and $d_{G^*}(u_1)=a$, we have

\begin{eqnarray*}
\rho x_{u_1} &=& \sum_{u\in T, u\sim u_1} x_u + \sum_{v\in V(Q_1), v\sim u_1} x_v + x_{w_1}\leq r x_{u_1} + \sum_{1\leq i\leq a-r-1} x_{v_i} + x_{w_1}\\
&<&(r+1) x_{u_1} + \sum_{1\leq i\leq a-r-1} x_{v_i},
\end{eqnarray*}
and hence
\begin{eqnarray*}
x_{u_1} < \frac{1}{\rho - r - 1} \sum_{1\leq i\leq a-r-1} x_{v_i}.
\end{eqnarray*}
Since $x_{u_1}>0$, $a-r-1\geq 1$, which implies that $r\leq a-2$. Note that
\begin{eqnarray*}
\rho x_{v_{n-a-2}} \geq \sum_{v\in V(Q_1), v\sim v_{n-a-2}} x_v \geq \sum_{1\leq i\leq a-r-1} x_{v_i} + (n-2a+r-2) x_{v_{n-a-2}}.
\end{eqnarray*}
Then we have
\begin{eqnarray*}
x_{v_{n-a-2}} \geq \frac{1}{\rho - (n-2a-3)-r-1} \sum_{1\leq i\leq a-r-1} x_{v_i}.
\end{eqnarray*}
By $\rho>n-a-3$, $r\leq a-2$, and $n \geq 2a^2 + 136a + 164$, we obtain that $1 < \rho - (n-2a-3)-r-1  < \rho - r-1$. Hence
\begin{eqnarray}\label{eq14}
x_{v_{n-a-2}}\geq \frac{1}{\rho - (n-2a-3)-r-1} \sum_{1\leq i\leq a-r-1} x_{v_i}> \frac{1}{\rho - r - 1} \sum_{1\leq i\leq a-r-1} x_{v_i}>x_{u_1}.
\end{eqnarray}

We first claim that $E(G^*[T])= \emptyset$. Otherwise, there exists an edge $uu'\in E(G^*[T])$. Note that $|N_{G^*}(u)\cup N_{G^*}(u')|\leq 2a <|V(Q_1)|$. Then there exists some vertex $v\in V(Q_1)$ such that $vu, vu' \notin E(G^*)$. We construct a new graph $G'$ from $G^*$ by deleting the edge $uu'$ and adding edges $vu$ and $vu'$.
One can observe that $G'\in \mathcal{G}_n^{a,b}$. By \eqref{eq14} and Lemma \ref{le2.4}, $\rho(G')>\rho,$ contradicting the maximality of $\rho$. Hence there are no edges inside $T$.

Finally, we can prove that $u$ must be adjacent to $v_ 1, v_2, \ldots, v_{a-1}$ for any $u\in T$.
Otherwise, there exists some $v_i$ such that $u\sim v_i$, where $i\geq a.$ Note that $d_{Q_1}(u)=a-1.$ Then there must exist some $v_j$ such that $u\nsim v_j$, where $1\leq j\leq a-1.$
Let $G''=G^*-uv_i+uv_j.$ Note that $G''\in  \mathcal{G}_n^{a,b} $ and $x_{v_j}\geq x_{v_i}$. By Lemma \ref{le2.4}, $\rho(G'')>\rho$, which contradicts the maximality of $\rho$. Hence $G^*\cong G_{n}^{a}.$
\hspace*{\fill}$\Box$

\vspace{10mm}
\noindent
{\bf Declaration of competing interest}
\vspace{3mm}

The authors declare that they have no known competing financial interests or personal relationships that could have appeared to influence the work reported in this paper.

\vspace{5mm}
\noindent
{\bf Data availability}
\vspace{3mm}

No data was used for the research described in this paper.



\end{document}